\newtheorem{conjecture}{Conjecture}[section]
\newtheorem{theorem}[conjecture]{Theorem}
\newtheorem{remark}[conjecture]{Remark}
    \newtheorem{lemma}[conjecture]{Lemma}
\newtheorem{proposition}[conjecture]{Proposition}
\newtheorem{definition}[conjecture]{Definition}
\newcommand\independent{\protect\mathpalette{\protect\independent}{\perp}} 
\def\independent#1#2{\mathrel{\rlap{$#1#2$}\mkern2mu{#1#2}}}
\date{\vspace{-5ex}}
\date{\today}
\begin{document}

\title{Rearrangements and infimum convolutions}
\author{Devraj Duggal, James Melbourne, and Cyril Roberto}
\maketitle
\begin{abstract}
We develop a general comparison result for inf-convolution operators related to rearrangement. As a consequence we derive comparison results under spherically symmetric rearrangement for Laplace and polar transforms. As a by product we simplify existing proofs related to the functional Blaschke-Santalo's inequality of Keith Ball and derive a comparison result for some parabolic PDE.
\end{abstract}

\section{Introduction}

In his seminal papers \cite{talenti,talenti76}, Talenti used spherically symmetric rearrangements to compute the optimal constant in  Sobolev inequalities (see also \cite{aubin}) and to obtain a comparison result between the solutions of the elliptic equations $-\Delta u=f$ and $-\Delta v=f^*$ (the rearranged function $f^*$ of $f$). 
These two results open wide lines of investigation in functional analysis and PDE theory. 
As is well-known, rearrangements proved rather fruitful in various field of mathematics that include functional analysis, PDE and Probability Theory. To give just a partial list of applications is out of reach and we refer the reader to the books \cite{lieb-loss,baerstein,leoni} for general accounts on rearrangement.
One key feature of rearrangements, illustrated by Talenti's papers, is that it allows one to reduce a $n$-dimensional problem to a one-dimensional one which becomes simpler and sometimes solvable.

\medskip

In this paper our aim is to proceed along the same lines. We will provide, after \cite{melbourne2019rearrangement}, some new general rearrangement comparison result related to inf-convolution operators. This will be applied to the comparison of Laplace and polar transforms and some comparison of solutions of a parabolic equation that are given by inf-convolutions. Also, after rearrangement and reduction to dimension one,  we may provide simple proofs of existing results in convex geometry, namely on the existence of maximizers in the functional form of the Blaschke-Santalo inequality and its polar counterpart, together with the best constant in the former inequality.

\medskip

In the next section (Section \ref{sec:notations-definitions}), we introduce the necessary material and in particular the notion of inf-convolution of interest for us, together with a general notion of rearrangement (that includes the classical spherically symmetric rearrangement on the Euclidean space, the Gaussian rearrangement of Ehrhard and rearrangement on the sphere, and many others).

\smallskip

Section \ref{sec:general} is dedicated to the statement and the proof of one of our main theorems (Theorem \ref{th:isoperimetric-rearrangeement}).

\smallskip

In Section  \ref{sec:transforms} we apply our main theorem to compare the action of Laplace and polar transforms on a function and its rearranged counterpart.

\smallskip

Finally Section \ref{sec:applications} collects some applications. The first one is related to solution of some parabolic equations. The second and third give a simple proof of a known result by Ben Li \cite{li} on the existence of extremizers in the functional form of Blaschke-Santalo's inequality \cite{ball86,artstein-klartag-milman,fradelizi-meyer,lehec} in convex geometry, and a proof of such an inequality through a new semi-group argument developed by Cordero et al. \cite{cordero,CFL}, after Sakamura-Tsuji \cite{nakamura}.

\section{Notations and definitions}
\label{sec:notations-definitions}

In this section we collect the necessary definitions that we will use throughout the paper. 

We assume that $(\mathcal{X},\nu)$ and $(\mathcal{Y}, \mu)$ are Polish measure spaces endowed with their Borel $\sigma$-algebra. A \emph{cost function} is a measurable function $\phi \colon \mathcal{X} \times \mathcal{Y} \to \mathbb{R}$. Observe that we relax here the usual definition of a cost function, which, in the transport theory, is usually assumed to take only non-negative values.

One of the central objects in this paper is the infimum convolution operator.

\begin{definition}[Infimum convolution] \label{def:inf-convolution}
    For $\phi$ a cost function on $\mathcal{X} \times \mathcal{Y}$, and $f: \mathcal{Y} \to \mathbb{R}$ define the \emph{infimum convolution} $Q_\phi f: \mathcal{X} \to \mathbb{R} \cup \{-\infty\}$ by
    \[
        Q_\phi f (x) \coloneqq \inf_{y \in \mathcal{Y}} \left\{ f(y) + \phi(x,y) \right\}.
    \]
\end{definition}

%In some places authors consider cost functions taking the value $\infty$. We do not consider this here for simplicity. 
In many interesting situations related to transport theory or Hamilton Jacobi equations $\mathcal{X}=\mathcal{Y}=\mathbb{R}^n$ (or some smooth manifold) and the cost function is a function of the distance $\phi(x,y)=d(x,y)^p$ for $p \geq 1$, see \textit{e.g.}\ \cite{villani}. Then the infimum convolution $Q_\phi$ amounts to the classical Hopf-Lax formula \cite{evans}.
More general cost functions were considered in the literature, related to optimal transport in a weak sense \cite{GRST} with applications in economy \cite{Backhoff-Veraguas-Pammer}. In such applications, one considers $X=\mathbb{R}^n$ and Y to be a subset of all probability measures on $\mathbb{R}^n$.
Although such general costs enter the framework of our result, we will not consider them since no associated rearrangement seems known in the corresponding literature. %\footnote{which would suppose to have some optimal isoperimetric inequality on the set of probability measures}.

In the next definition we introduce the notion of "enlargement" associated to a cost function.

\begin{definition}[Enlargement]
    For a cost function $\phi$ on $\mathcal{X} \times \mathcal{Y}$ and an $\varepsilon \in \mathbb{R}$, define for $A \subseteq \mathcal{Y}$,
    \[
        A_\varepsilon \coloneqq A_{\phi,\varepsilon} \coloneqq \{x \in \mathcal{X} : \exists y \in A \text{ s.t. }  \phi(x,y) < \varepsilon \} 
    \]
    and 
     \[
        A_{\overline{{\varepsilon}}} \coloneqq  A_{\overline{\phi,\varepsilon}} \coloneqq \{x \in \mathcal{X} : \exists y \in A \text{ s.t. }  \phi(x,y) \leq \varepsilon \}.
    \]
\end{definition}

When $\mathcal{X} = \mathcal{Y}$ and $\phi$ is a distance, then $A_\varepsilon$ and $A_{\overline{\varepsilon}}$ agree with usual notion of an $\varepsilon$-enlargement of a set in the literature. Moreover, in this case, $A_\varepsilon$ and $A_{\overline{\varepsilon}}$ are only non empty for $\varepsilon > 0$ and $\varepsilon \geq 0$ respectively.
Observe that in the definition of the enlargement of a set, we allow $\varepsilon$ to take negative values. This comes from the fact that, in some situations, the cost might take negative values.

We denote the open sets of a topological space $E$ by $\mathcal{O}(E)$ and the Borel sets (the members of the smallest $\sigma$-algebra containing $\mathcal{O}(E)$) of a topological space $E$, by $\mathcal{B}(E)$.

The second central objet of this paper is the notion of \emph{rearrangement}.

\begin{definition}[Rearrangement-Decreasing rearrangement]
    A map $*: \mathcal{B}(\mathcal{Y}) \to \mathcal{O}(\mathcal{Y})$ is a \emph{rearrangement} under a measure $\mu$ if $\mu(A) = \mu(*(A))$ and $\mu(A) \leq \mu(B)$ implies $*(A) \subseteq *(B)$.  Further for a measurable function $f: \mathcal{Y} \to [0,\infty)$, its $*$-symmetric \emph{decreasing rearrangement} is defined by
    \begin{align*}
        f^*(y) \coloneqq  \int_0^\infty \mathbbm{1}_{ *(\{ f> \lambda \})} (y) d \lambda, \qquad y \in \mathcal{Y}.
    \end{align*}
\end{definition}

For brevity of notation we write $A^* \coloneqq *(A)$.  We will further assume that the map $*$ satisfies 
\begin{align} \label{eq: decreasing rearrangment characterization}
    \{ f > \lambda \}^*  = \{ f^* > \lambda \}.
\end{align}  Note that $\{ f > \lambda \}^*  = \{ f^* > \lambda \}$ is ensured (see Proposition 2.7 of \cite{melbourne2019rearrangement}) if $*$ is a rearrangement such that $A_i \subseteq A_{i+1}$ implies 
\begin{align} \label{eq: rearrangement requirement}
        \left( \bigcup_{i=1}^\infty A_i \right)^* \ \subseteq \ \bigcup_{i=1}^\infty A_i^*.
\end{align}

In this paper we will mainly consider the spherically symmetric decreasing rearrangement on $\mathbb{R}^n$, spherical caps on the sphere, and the Ehrhard Gaussian rearrangement \cite{ehrhard}. 
For a more complete zoology of rearrangements, we refer the reader  to Section 2 of the book by Kawohl \cite{kawohl}.
For $A$ a Borel set of $\mathbb{R}^n$,
its  spherically symmetric rearrangement is the ball $A^*$ centered at $0$ with the same Lebesgue measure, while, the Gaussian rearrangement of $A$ is the half space
$A^*=\{x \in \mathbb{R}^n: x_1 \leq a\}$ with $a$ chosen so that 
$A^*$ has the same (standard) Gaussian measure as $A$. 

One can also define increasing rearrangement as follows. 

\begin{definition}[Increasing rearrangement]
    For $* \colon \mathcal{B}(\mathcal{Y}) \to \mathcal{O}(\mathcal{Y})$ a rearrangement with respect to a measure $\mu$, and a $\mu$-measurable function $f: \mathcal{Y} \to \mathbb{R}$, we define the $*$-symmetric \emph{increasing rearrangement} by,
    \[
        f_*(y) = - \log (e^{-f})^*.
    \]
\end{definition}

Note that by \eqref{eq: decreasing rearrangment characterization}, 
\begin{align} \label{eq: increasing rearrangement characterization}
    \{ f_* < \lambda \} = \{ f < \lambda \}^*
\end{align}
as $\{ f_* < \lambda \} = \{ (e^{-f})^* > e^{-\lambda} \} = \{ e^{-f} > e^{- \lambda } \}^* = \{ f < \lambda \}^*$.

As is well known, rearrangements are related to isoperimetric inequalities.
For the 3 classical isoperimetric inequalities alluded to above (Euclidean on $\mathbb{R}^n$, the sphere and the Gaussian space), the isoperimetric sets (realizing equality in the isoperimetric inequality) are nested 
in the sense that one contains the other and ,moreover, the set of extremal sets is totally ordered. This property leads to the following definition.

% \begin{definition}[Isoperimetric rearrangement]
% \label{def:isoperimetric-rearrangement}
%     We consider a map  $*: \mathcal{B}(\mathcal{Y}) \to \mathcal{O}(\mathcal{Y})$ to be an \emph{isoperimetric rearrangement} for the cost function $\phi$ and measure $\mu$, if $* (\mathcal{B}(\mathcal{Y}))$ is totally ordered and any Borel set $A \subseteq \mathcal{Y}$ and $\varepsilon \in \mathbb{R}$ satisfy
%     \[
%         \mu(A_{\phi,\varepsilon}) \geq \mu((A^*)_{\phi,\varepsilon}).
%     \]
%     In that case, we may say that $(*,\mu,\phi)$ is an isoperimetric rearrangement.
% \end{definition}

% Similarly, when the complement of the isoperimetric sets are nested, we define the isoperimetric-co-rearrangement.

% \begin{definition}[Isoperimetric co-rearrangement]\label{def:isoperimetric-co-rearrangement}
%     We consider a map  $\star: \mathcal{B}(\mathcal{Y}) \to \mathcal{O}(\mathcal{Y})$ to be an \emph{isoperimetric co-rearrangement} for the cost function $\phi$ and measure $\mu$, if $(\star (\mathcal{B}(\mathcal{Y})))^c$ -- the family of complements of rearranged sets -- is totally ordered and any Borel set $A \subseteq \mathcal{Y}$ and $\varepsilon \in \mathbb{R}$ satisfy
%     \[
%         \mu((A_{\phi,\varepsilon})^c) \leq \mu(((A^\star)_{\phi,\varepsilon})^c) .
%     \]
%     In that case, we may say that $(\star,\mu,\phi)$ is an isoperimetric co-rearrangement.
% \end{definition}

\begin{definition}
    \label{def:isoperimetric-rearrangement}
    We consider a map  $*: \mathcal{B}(\mathcal{Y}) \to \mathcal{O}(\mathcal{Y})$ to be an \emph{isoperimetric rearrangement} for the cost function $\phi$ and measure $\mu$, if $* (\mathcal{B}(\mathcal{Y}))$ is totally ordered and any Borel set $A \subseteq \mathcal{Y}$ and $\varepsilon \in \mathbb{R}$ satisfy
    \begin{align} \label{eq: rearrangement one way}
        \mu(A_{\phi,\varepsilon}) \geq \mu((A^*)_{\phi,\varepsilon}).
    \end{align}
    and
    \begin{align} \label{eq: rearrangement two way}
        \mu((A_{\phi,\varepsilon})^c) \leq \mu(((A^\star)_{\phi,\varepsilon})^c) 
    \end{align}
    In this case, we may say that $(*,\mu,\phi)$ is an isoperimetric rearrangement.
\end{definition}

In \eqref{eq: rearrangement one way} and \eqref{eq: rearrangement two way} we adopt the convention that 
$\infty \leq \infty$. 
In particular, if $\mu$ is a finite measure, then \eqref{eq: rearrangement one way} and \eqref{eq: rearrangement two way} are equivalent.  If $\mu$ is  not a finite measure, $\mu((A^*)_{\phi,\varepsilon})$ is always finite and \eqref{eq: rearrangement one way} holds, then \eqref{eq: rearrangement two way} holds trivially.  Similarly if $\mu(((A^\star)_{\phi,\varepsilon})^c) $ is always finite and \eqref{eq: rearrangement two way} holds, then $\mu((A_{\phi,\varepsilon})^c)$ is always finite and hence $\mu((A_{\phi,\varepsilon}))$ is always infinite implying \eqref{eq: rearrangement one way} trivially.

\begin{remark} \label{rem:transfer}
If $\Phi \colon \mathcal{X} \times \mathcal{Y} \to \mathbb{R}$ is a cost function and $\alpha \colon \mathrm{Im(\phi)} \to \mathbb{R}$ is a measurable one to one map from the image $\mathrm{Im(\phi)} \subset \mathbb{R}$ of $\phi$ onto its image $\mathrm{Im}(\alpha)$, then $\psi = \alpha(\Phi)$  is a cost function on $\mathcal{X} \times \mathcal{Y}$ and $(*,\mu,\phi)$
is an isoperimetric rearrangement/co-rearrangement if and only if 
$(*,\mu,\psi)$
is an isoperimetric rearrangement/co-rearrangement 
\end{remark}

We now illustrate  these definitions 
with 3 classical rearrangements, namely the spherically symmetric rearrangement on the Euclidean space, the Gaussian rearrangement of Ehrhard, and the symmetric rearrangement on the sphere.

\subsubsection*{Spherically symmetric rearrangements on the Euclidean space}
Consider $\mathcal{X}=\mathcal{Y}=\mathbb{R}^n$ equipped with the Lebesgue measure $\lambda$, \textit{i.e.}\ $\mu=\nu=\lambda$. For sets, we interchangeably use the notation $|\cdot|$ for the Lebesgue measure.

Set $B \coloneqq \{ x : \|x\| < 1 \}$ for the open unit ball of $\mathbb{R}^n$. Here and in the sequel $\|\cdot\|$ stands for the Euclidean norm.

As mentioned earlier, given a Borel set $A$ of $\mathbb{R}^n$, its spherically symmetric rearrangement is $A^*=rB=\{x \in \mathbb{R}^n : \|x\| < r\}$, with $r$ chosen so that $|A^*|=|A|$. 

The classical isoperimetric inequality for the Lebesgue measure on $\mathbb{R}^n$ can be seen, as is well known \cite{ledoux}, as a special case of the celebrated Brunn-Minkowski inequality. This inequality may be formulated as follows \cite{gardner}: for Borel sets $A,C$ of finite measure, their Minkowsky sum $A+C \coloneqq \{a+c, \; a \in , c \in C\}$ decreases on spherically decreasing rearrangement, \textit{i.e.}\ 
$$
|A+C| \geq |A^*+C^*|.
$$
Now, consider the cost $\phi(x,y)=\|x-y\|$.
Since spherically symmetric rearrangements are centered balls (that are totally ordered), applying the Brunn-Minkoswki inequality
to $A$ of finite Lebesgue measure and the unit ball $C=B$ and noting that 
$A_\varepsilon = A + \varepsilon B$
and
$(A^*)_\varepsilon = A^* + \varepsilon B$ (that is a ball), 
$$
|A_\varepsilon| \geq |(A^*)_\varepsilon |.
$$
In particular, the spherically symmetric rearrangement is an isoperimetric rearrangement for the cost $\|x-y\|$ and the Lebesgue measure $\lambda$ as in the sense of Definition \ref{def:isoperimetric-rearrangement}.
By Remark \ref{rem:transfer} this property transfers to any cost $\alpha(\|x-y\|)$ for a proper choice of the map $\alpha$.

\subsubsection*{Ehrhard Gaussian rearrangement on the Euclidean space}
Consider again $\mathcal{X}=\mathcal{Y}=\mathbb{R}^n$ with the cost $\phi(x,y)=\|x-y\|$ but now equipped with the standard Gaussian measure  $\gamma$ with density $\varphi(x)=e^{-|x|^2/2}/(2\pi)^{n/2}$, $x \in \mathbb{R}^n$. Let $\Phi(t)=\int_{-\infty}^t e^{-s^2/2}\frac{ds}{\sqrt{2\pi}}$, $t \in \mathbb{R}$. 

Given a Borel set $A$ of $\mathbb{R}^n$, its Gaussian rearrangement is 
$A^*=\{x \in \mathbb{R}^n : x_1 < a\}$
where $a=\Phi^{-1}(\gamma(A))$ is chosen so that $\gamma(A^*)=\gamma(A)$.
Observe that the choice of the first coordinate is arbitrary and one could equivalently consider rearrangements of the form 
$A^*=\{x \in \mathbb{R}^n : \langle x , u \rangle < a\}$, for any unit vector $u$, where $\langle\cdot,\cdot \rangle$ stands for the Euclidean scalar product.

Then, the Gaussian isoperimetric inequality of Borell \cite{borell}, Sudakov and Tsireslon \cite{sudakov}
asserts that for any Borel set $A$
$$
\gamma(A_\varepsilon) \geq \Phi( \Phi^{-1}(\gamma(A)) + \varepsilon).
$$
By construction 
$\Phi( \Phi^{-1}(\gamma(A)) + \varepsilon)) = \gamma(\{x : x_1 \leq \Phi^{-1}(\gamma(A)) + \varepsilon\})$.
On the other hand, 
for $y \in (A^*)_\varepsilon$ and 
$x \in A^*=\{x: x_1 \leq \Phi^{-1}(\gamma(A))\}$ with $\|x-y\| \leq \varepsilon$, it holds that
$y_1=x_1+y_1-x_1 \leq \Phi^{-1}(\gamma(A)) + \varepsilon$. Hence 
$(A^*)_\varepsilon \subset \{x : x_1 \leq \Phi^{-1}(\gamma(A)) + \varepsilon\}$ and in turn, from the Gaussian isoperimetric inequality
$$
\gamma(A_\varepsilon) \geq \gamma((A^*)_\varepsilon).
$$
In conclusion, the Ehrhard Gaussian rearrangement is an isoperimetric rearrangement for the cost $\|x-y\|$ and the standard Gaussian measure $\gamma$. 
Again from Remark \ref{rem:transfer}, this property transfers to more general costs $\alpha(\|x-y\|)$.

\subsubsection*{Isoperimetry on the sphere}
An other classical example is given by the sphere $\mathcal{X}=\mathcal{Y}=S^{n-1}\subseteq\mathbb{R}^{n}$ equipped with the normalized uniform measure $\mu=\nu=\sigma_{n-1}$. Let $d$ denote the standard geodesic distance on the sphere. For a Borel set $A\in\mathcal{B}(S^{n-1})$, let $A_{\varepsilon}=\{x\in S^{n-1}:d(x,A)<\varepsilon\}$ be the usual enlargement related to $d$. 
By convention, we speak of spherical caps centered at the north pole (\textit{i.e.}\ the geodesic balls centered at the north pole). The rearrangement of interest associates $A\subseteq S^{n-1}$ to $A^*\subseteq S^{n-1}$ where $A^*$ is the cap centered at the north pole such that $\sigma_{n-1}(A)=\sigma_{n-1}(A^*)$.
The classical Isoperimetry result due to Lévy (see \textit{e.g.}\ \cite{ledoux}) states that 
$$
\sigma_{n-1}(A_\epsilon)\geq\sigma_{n-1}((A^*)_{\epsilon}) .
$$ 
Therefore, $(*,\sigma_{n-1},d)$ is an isoperimetric rearrangement and so is  
$(*,\sigma_{n-1},\alpha(d))$ for proper choices of the map $\alpha$ by Remark \ref{rem:transfer}.

\section{A general Argument} \label{sec:general}

Our aim is to prove the following general theorem on isoperimetric rearrangements and inf-convolution which follows and extends the chief ideas of \cite{melbourne2019rearrangement}.
At the end of the section we will give some examples of applications together with a weaker form of our general result when only a non optimal isoperimetric inequality is available.

\begin{theorem} \label{th:isoperimetric-rearrangeement}
    For $(*, \mu, \phi)$ an isoperimetric rearrangement and $\lambda \in \mathbb{R}$,
    \[
        \mu (\{ Q_\phi f < \lambda \}) \geq \mu (\{ Q_\phi f_* < \lambda \}) %.
   % \]
     %   For $(\star,\mu, \phi)$ an isoperimetric co-rearrangement, and $\lambda \in \mathbb{R}$,
   % \[
   \qquad \mbox{and} \qquad 
        \mu (\{ Q_\phi f \geq \lambda \}) 
        \leq \mu (\{ Q_\phi f_* \geq \lambda \}) .
    \]
\end{theorem}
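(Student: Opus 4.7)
The plan is to combine a layer-cake decomposition of the sub-level sets of $Q_\phi f$ with the pointwise isoperimetric inequality \eqref{eq: rearrangement one way} and the total-order structure of $*(\mathcal{B}(\mathcal{Y}))$. Setting $A_t \coloneqq \{f<t\}$, the first step would be to establish the general identity
\[
  \{Q_\phi f < \lambda\} \;=\; \bigcup_{t\in\R}(A_t)_{\phi,\lambda-t}.
\]
The inclusion $(\supseteq)$ is immediate from the definition of $Q_\phi$; for $(\subseteq)$ I would, given $x$ with $Q_\phi f(x) < \lambda$, pick a near-minimizer $y$ with $f(y)+\phi(x,y)<\lambda-\eta$ and set $t:=f(y)+\eta/2$, so that $y\in A_t$ and $\phi(x,y)<\lambda-t$. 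Since the identity is valid for any function, applying it to $f_*$ and using $\{f_*<t\}=A_t^*$ from \eqref{eq: increasing rearrangement characterization} gives $\{Q_\phi f_*<\lambda\}=\bigcup_{t\in\R}(A_t^*)_{\phi,\lambda-t}$.

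Next I would invoke the pointwise isoperimetric inequality \eqref{eq: rearrangement one way}, which yields $\mu((A_t)_{\phi,\lambda-t})\geq\mu((A_t^*)_{\phi,\lambda-t})$ for every $t$. Since $\{Q_\phi f<\lambda\}$ contains each $(A_t)_{\phi,\lambda-t}$, this gives
\[
  \mu(\{Q_\phi f<\lambda\}) \;\geq\; \sup_{t\in\R}\mu\bigl((A_t^*)_{\phi,\lambda-t}\bigr).
\]
To close the argument it remains to prove the companion identity
\[
  \mu\Bigl(\bigcup_{t\in\R}(A_t^*)_{\phi,\lambda-t}\Bigr) \;=\; \sup_{t\in\R}\mu\bigl((A_t^*)_{\phi,\lambda-t}\bigr),
\]
after which the first stated bound $\mu(\{Q_\phi f<\lambda\})\geq\mu(\{Q_\phi f_*<\lambda\})$ is immediate.

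This last measure identity is where I expect the main difficulty to lie. The sets $A_t^*$ form a nested chain in $*(\mathcal{B}(\mathcal{Y}))$, but the enlargement radius $\lambda-t$ shrinks as the base set grows, so the family $\{(A_t^*)_{\phi,\lambda-t}\}_t$ need not itself be nested in $\mathcal{Y}$ a priori. In each of the standard models (Euclidean balls, Ehrhard half-spaces, spherical caps) the $\phi$-enlargement of an $*$-set is again an $*$-set, and the total-order hypothesis then forces this family to be totally ordered in $\mathcal{Y}$, so the supremum identity is automatic. To reach the generality of Definition~\ref{def:isoperimetric-rearrangement}, I would try either to deduce ``stability of $*(\mathcal{B}(\mathcal{Y}))$ under $\phi$-enlargement'' from the isoperimetric axioms (comparing the measures of $(A^*)_\varepsilon$ and $(A_\varepsilon)^*$ and appealing to total order), or else to pass to a countable increasing sequence $t_n$ along which the supremum is approached and use \eqref{eq: rearrangement requirement} to push the limiting union back into $*(\mathcal{B}(\mathcal{Y}))$.

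The second inequality $\mu(\{Q_\phi f\geq\lambda\})\leq\mu(\{Q_\phi f_*\geq\lambda\})$ would follow by the dual argument: from $\{Q_\phi f\geq\lambda\}=\bigcap_t((A_t)_{\phi,\lambda-t})^c$ one replaces \eqref{eq: rearrangement one way} by \eqref{eq: rearrangement two way}, turns suprema into infima, and applies the chain structure to the intersection of complements. The conventions on infinite measure stated just below Definition~\ref{def:isoperimetric-rearrangement} make the two inequalities equivalent in the finite-measure case and handle the infinite-measure case automatically.
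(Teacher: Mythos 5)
Your plan is the same as the paper's: decompose $\{Q_\phi f < \lambda\}$ into enlargements of level sets of $f$, apply the pointwise isoperimetric inequality \eqref{eq: rearrangement one way} to each piece, and then recover the measure of the union from the supremum of the piecewise measures via the total-order structure. However, the execution has a genuine gap at the parametrization. The paper's Lemma~\ref{lem: decomposition} decouples the level parameter and the enlargement parameter and restricts both to rationals, so that the index set $S(\lambda)\subset\mathbb{Q}^2$ is countable. That is not cosmetic: your union $\bigcup_{t\in\mathbb{R}}(A_t)_{\phi,\lambda-t}$ is a priori an uncountable union of Borel sets, so its Borel-measurability is not free, and even when the family is a chain, ``measure of union equals supremum of measures'' is a fact about countable chains (one exhausts by an increasing sequence). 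Over $\mathbb{R}$ you would first have to reduce to a countable cofinal sub-family, which is exactly what the rational parametrization in the paper delivers for free. So before anything else you should replace $t\in\mathbb{R}$ by independent rational parameters $(q_1,q_2)$ with $q_1+q_2<\lambda$ as in Lemma~\ref{lem: decomposition}.

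On the other hand, you have put your finger on the genuinely delicate point: whether the enlarged family $\{(A_t^*)_{\phi,\lambda-t}\}_t$ (equivalently $\{(\{f<q_1\}^*)_{\phi,q_2}\}_{q\in S(\lambda)}$) is itself totally ordered, given that the base $*$-sets grow while the enlargement parameter shrinks. The paper disposes of this in one clause, ``since the rearranged sets are totally ordered,'' which is immediate only when $\phi$-enlargements of $*$-sets remain on the same chain --- true in every model the paper considers (centered balls under $\|x-y\|$, Ehrhard half-spaces, spherical caps), but not visibly forced by the axioms of Definition~\ref{def:isoperimetric-rearrangement} alone. Your first suggested remedy (derive closure of $*(\mathcal{B}(\mathcal{Y}))$ under $\phi$-enlargement from the axioms) is the right idea to pursue; your second one, invoking \eqref{eq: rearrangement requirement}, will not by itself close the gap, because \eqref{eq: rearrangement requirement} concerns $*$-sets and their increasing unions, not their $\phi$-enlargements. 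In short: same strategy as the paper, and a correct diagnosis of the subtle step, but you need the countable rational decomposition up front, and you still have to either verify or hypothesize that enlargements of $*$-sets remain a chain.
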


For an infinite measure $\mu$, the conclusion of the above statement may read as $\infty \geq \infty$ (or $\infty \leq \infty$) which holds true by virtue of our convention. 

Observe that for the symmetric decreasing rearrangement and the Lebesgue measure on $\mathbb{R}^n$, that are isoperimetric rearrangement for some costs, the conclusion of the theorem involves the symmetric increasing rearranged function $f_*$ and not $f^*$.

The proof of Theorem \ref{th:isoperimetric-rearrangeement}
relies on the following decomposition lemma. The main idea here is to obtain a countable union of sets. Set $\mathbb{Q}$ for the set of rational numbers.

\begin{lemma} \label{lem: decomposition}
    For $\lambda \in \mathbb{R}$, the sublevel sets of an infimum convolution have the following decomposition,
    \[
        \{ Q_\phi f < \lambda \} = \bigcup_{q \in S(\lambda)} \{ f < q_1 \}_{\phi,q_2}
    \]
    where
    \[
        S(\lambda) \coloneqq \left\{ q=(q_1,q_2) \in \mathbb{Q}^2 : q_1 + q_2 < \lambda \right\}.
    \]
\end{lemma}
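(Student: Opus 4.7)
The plan is to prove the two set-inclusions separately; the key point is that restricting to rational pairs $q \in \mathbb{Q}^2$ (rather than arbitrary real pairs) gives a countable decomposition, which is the whole reason for phrasing the lemma this way. The density of $\mathbb{Q}$ in $\mathbb{R}$ is what lets us retain equality rather than a mere inclusion.

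For the easy inclusion $\supseteq$: fix $q = (q_1,q_2) \in S(\lambda)$ and $x \in \{f < q_1\}_{\phi, q_2}$. By the definition of the enlargement there is some $y \in \mathcal{Y}$ with $f(y) < q_1$ and $\phi(x,y) < q_2$. Then
\[
    Q_\phi f(x) \leq f(y) + \phi(x,y) < q_1 + q_2 < \lambda,
\]
so $x \in \{Q_\phi f < \lambda\}$.

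For the inclusion $\subseteq$: suppose $Q_\phi f(x) < \lambda$. Since the infimum is strictly less than the real number $\lambda$, some $y \in \mathcal{Y}$ satisfies $f(y) + \phi(x,y) < \lambda$ (and in particular both $f(y)$ and $\phi(x,y)$ are finite, since they are real-valued by assumption). Writing $a = f(y)$, $b = \phi(x,y)$, pick $\delta > 0$ with $a + b + 2\delta < \lambda$, and then by density of the rationals choose $q_1 \in \mathbb{Q} \cap (a, a+\delta)$ and $q_2 \in \mathbb{Q} \cap (b, b+\delta)$. Then $q_1 + q_2 < \lambda$, so $q = (q_1,q_2) \in S(\lambda)$, while $f(y) < q_1$ and $\phi(x,y) < q_2$ show that $x \in \{f < q_1\}_{\phi, q_2}$.

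I do not expect any real obstacle here; the only minor care is to verify that $Q_\phi f(x) < \lambda$ implies the existence of a realizing $y$ (rather than merely an approximating sequence), which holds because $\lambda$ is a real number strictly above the infimum. The countability of the indexing set $S(\lambda) \subset \mathbb{Q}^2$ will then be the feature exploited in the proof of Theorem \ref{th:isoperimetric-rearrangeement}, where combining this decomposition with \eqref{eq: rearrangement requirement} and the isoperimetric inequality \eqref{eq: rearrangement one way} applied to each set $\{f < q_1\}$ should yield the measure comparison.
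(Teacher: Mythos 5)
Your proof is correct and takes essentially the same route as the paper: the easy inclusion is verified directly from the definitions, and for the reverse inclusion you use that the infimum is strictly below $\lambda$ to find a near-minimizer $y$, then sandwich $f(y)$ and $\phi(x,y)$ between themselves and slightly larger rationals whose sum stays below $\lambda$. Your remark about why a realizing $y$ exists (rather than only an approximating sequence) is the right observation and matches the paper's implicit use of the definition of infimum.
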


\begin{proof}
    If $x \in \{ f < q_1 \}_{\phi,q_2}$, for some $q \in S(\lambda)$ then there exists $y \in \{ f < q_1\}$ such that $\phi(x,y)< q_2$, thus
    \[
        f(y) + \phi(x,y) < q_1 + q_2 < \lambda
    \]
    so that $Q_\phi f(x) < \lambda$, giving $\bigcup_{q \in S(\lambda)} \{ f < q_1 \}_{\phi,q_2} \subseteq \{ Q_\phi f < \lambda \}$.
    For $x$ such that $Q_\phi f(x) < \lambda$, by definition there exists $y' \in \mathcal{Y}$ such that $f(y') + \phi(x,y') < \lambda$.  For $\delta > 0$, such a $y'$ belongs to $\{ f < f(y') + \delta \}$, as well as $\{ y : \phi(x,y) < \phi(x,y') + \delta \}$, thus
    \[
        x \in \{ f < f(y') + \delta \}_{\phi,\phi(x,y') + \delta} 
    \] 
    For $\delta$ chosen small enough, $f(y') + \phi(x,y') + 2 \delta < \lambda$.  Choosing $q = (q_1,q_2)$ such that $q_1 \in \mathbb{Q} \cap (f(y'), f(y') + \delta)$ and $q_2 \in (\phi(x,y'), \phi(x,y') + \delta)$ gives $x \in \{ f < q_1 \}_{\phi,q_2}$ for this $q \in S(\lambda)$, completing the proof.
\end{proof}

\begin{proof}[Proof of Theorem \ref{thm: Blaschke-Santalo rearrangement bis}]
The proof is obtained from the following computation: by Lemma \ref{lem: decomposition} and hypothesis (by definition of the isoperimetric rearrangement)        
    \begin{align*}
        \mu (\{ Q_\phi f < \lambda \} )
            &=
                \mu \left( \bigcup_{q \in S(\lambda)} \{ f < q_1 \}_{\phi,q_2} \right)
                %& (\mbox{by Lemma } \ref{lem: decomposition})
                    \\
            &\geq 
                \sup_{q \in S(\lambda)} \mu  \left( \{ f < q_1 \}_{\phi,q_2} \right)
                   \\
            &\geq 
                \sup_{q \in S(\lambda)} \mu  \left( (\{ f < q_1 \}^*)_{\phi,q_2} \right) .
                %& (\mbox{by definition of the rearrangement})     
                \\
  \end{align*}
Therefore, since the rearranged sets are totally ordered,          
    \begin{align*} 
    \mu (\{ Q_\phi f < \lambda \} )
            & \geq 
                \mu \left( \bigcup_{q \in S(\lambda)} (\{ f < q_1 \}^*)_{\phi,q_2} \right)
                %& (\mbox{by nestedness})   
                \\
            &=
                \mu \left( \bigcup_{q \in S(\lambda)} (\{ f_* < q_1 \})_{\phi,q_2} \right)
                    \\
            &=
                \mu (\{ Q_\phi f_* < \lambda \} ).
    \end{align*}
    where the first equality is \eqref{eq: increasing rearrangement characterization}, and the last equality is a second application of Lemma \ref{lem: decomposition}.

    The second part of the theorem follows the same line and uses the same arguments
       \begin{align*}
        \mu (\{ Q_\phi f \geq  \lambda \}) 
            &=
        \mu (\{ Q_\phi f <  \lambda \}^c)    
          =       \mu \left( \bigcap_{q \in S(\lambda)} \{ f < q_1 \}_{\phi,q_2}^c \right)
                %& (\mbox{by Lemma } \ref{lem: decomposition})
                    \\
            &\leq 
                \inf_{q \in S(\lambda)} \mu  \left( \{ f < q_1 \}_{\phi,q_2}^c \right)
            \leq 
                \inf_{q \in S(\lambda)} \mu  \left( (\{ f < q_1 \}^\star)_{\phi,q_2}^c \right)
                %& (\mbox{by definition of the rearrangement})     
                \\
            &=
                \mu \left( \bigcap_{q \in S(\lambda)} (\{ f < q_1 \}^*)_{\phi,q_2}^c \right)
                %& (\mbox{by nestedness})   
            =
                \mu \left( \bigcap_{q \in S(\lambda)} (\{ f_* < q_1 \})_{\phi,q_2}^c \right)
                    \\
            &=
                \mu (\{ Q_\phi f_* < \lambda \}^c) 
                 =
                 \mu (\{ Q_\phi f_* \geq \lambda \}) 
                 .
    \end{align*}
    This completes the proof of the theorem.
\end{proof}

As an illustrative example, one may consider the spherically symmetric rearrangement. It is an isoperimetric rearrangement for the cost 
$\phi(x,y)=tG(|x-y|/t)$, for $t>0$ and $G$ convex increasing (see the previous section). This special choice, and in particular the presence of the apparently superfluous parameter $t>0$, will prove useful for applications in Section \ref{sec:HJ}. 
By Theorem \ref{th:isoperimetric-rearrangeement} we can conclude that  the infimum convolution operator 
$$
Q_tf(x) \coloneqq \inf_{y \in \mathbb{R}^n} \left\{ f(y) + tG \left(\frac{|x-y|}{t} \right)  \right\}
$$
satisfies
\begin{equation} \label{eq:inf-conv}
|\{ Q_t f < \lambda \}| \geq |\{ Q_t f_* < \lambda \}| 
\end{equation}
for any $\lambda \in \mathbb{R}$.

% An other example is given by the sphere $S^{n-1}\subseteq\mathbb{R}^{n}$ equipped with the normalized uniform measure $\sigma_{n-1}$. Recall that $d$ denote the standard geodesic distance on the sphere. 
% Since
%  $(*,\sigma_{n-1},d)$ is an isoperimetric rearrangement. 
% Fix an increasing and convex function 
% $\alpha$ and define 
% $$
% Q_{\alpha}f(x)=\inf_{y\in S^{n-1}}f(y)+\alpha(d(x,y))
% $$ 
% Thanks to Remark \ref{rem:transfer},
% $(*,\sigma_{n-1}, \alpha(d))$ is an isoperimetric rearrangement. Therefore
% Theorem \ref{th:isoperimetric-rearrangeement} applies and leads to the following comparison
% $$
% \sigma_{n-1}\left(\{Q_{\alpha}f<\lambda\}\right)\geq\sigma_{n-1}\left(\{Q_{\alpha}f_{*}<\lambda\}\right) .
% $$

\medskip

We end this section with a final comment when an exact isoperimetric inequality need not hold. 
Besides the 3 classical examples given in the Section \ref{sec:notations-definitions}, there are some situations where the isoperimetric sets are known. To give a complete list of publications is out of reach. We refer the interested reader to \cite{ABFC} and references therein for recent results related to weighted measures on the upper half plane. For most of the exact isoperimetric inequalities the isoperimetric sets are totally ordered and the result of this paper apply. We will not continue in this direction and may instead consider situations where exact isoperimetric sets are not known. 
One important example is the class of strictly log-concave probability measures on $\mathbb{R}^{n}$
admitting the following density 
$$
\frac{d\mu}{dx}=e^{-V(x)}, 
\qquad x \in \mathbb{R}^n
$$ 
with $V''\geq \rho Id$, $\rho >0$, where $V''$ is a shorthand notation for the Hessian matrix of $V$, and where the inequality is understood in the quadratic sense.
Recall that 
$\Phi(t):=\int_{-\infty}^{t}\frac{1}{\sqrt{2\pi}}e^{-\frac{x^{2}}{2}}dx$, and set $\varphi=\Phi'$ for the density of the standard one dimensional Gaussian measure.
%$, \gamma(A)=\int_{A}\varphi(x)dx$$ 
It is  well known that $\mu$ satisfies the following isoperimetric inequality 
\begin{equation} \label{eq:levy}
    \mu^+(A)
    \geq 
    c
    \varphi\circ\Phi^{-1}(\mu(A)), \qquad A\in\mathcal{B}(\mathbf{R}^{n})
\end{equation} 
for some constant $c=c(\rho) \in (0,\infty)$
\cite{bakry-ledoux} (see also \cite{milman2010,ivanisvili-volberg,duggal2024curvature}).
When $V(x)=|x|^2/2-\frac{n}{2}\log(2\pi)$ the measure
$\mu$ is nothing but the $n$ dimensional standard Gaussian and \eqref{eq:levy} holds with constant $c=1$.
This is the classical Borell-Sudakov-Tsirelson Gaussian isoperimetry. 
For other  (than Gaussian) strictly log-concave probability measures, 
\eqref{eq:levy} is not optimal \cite{bobkov-houdre}.

A second situation that was considered and studied in the literature pertains to measures possessing a product structure. More precisely, these measures are of the form 
$$
\frac{d\mu_{n}}{dx}
=
\prod_{i= 1}^{n}\frac{1}{Z_{\Psi}}e^{-\Psi(|x_{i}|)}%=\prod_{i\geq 1}^{n}\rho(x_{i})
$$
where $\Psi:\mathbb{R}_{+}\rightarrow\mathbb{R}_+$ is an increasing convex function where $\Psi(0)=0$ and $\sqrt{\Psi}$ is concave (here $Z_\Psi$ is a normalization constant that turn $\mu_n$ into a probability measure). We refer the reader to \cite{roberto} for as account on isoperimetry for products of probability measures. The assumption on the potential $\Psi$ indicates that the measure is "between" exponential and Gaussian. In particular, it is not strictly convex and therefore this class of measures differ from the previous ones. 
Let $\rho=e^{-\Psi}/Z_\Psi$ and $H(t)=\int_{-\infty}^t \rho(x)dx$.
It has been shown \cite{bobkov-houdre97,BCR06,BCR07}
that there exists $c>0$ (independent of $n$) such that 
\begin{equation} \label{eq:iso-product}
\mu_{n}^+(A)\geq c \rho \circ H^{-1}(\mu_{n}(A)), \qquad  A\in\mathcal{B}(R^{n}) .
\end{equation}
Moreover, such an isoperimetric inequality for $c=1$ and $n \geq 2$ implies that $\mu$ is Gaussian \cite{bobkov-houdre}. Again, such an isoperimetric inequality is not exact and extremal isoperimetric sets are not known even for simple cases like the double sided exponential measure \cite{bobkov-houdre97,BCR06}.

Although the isoperimetric inequalities in the two classes of probability measures introduced above are not optimal, our approach on the rearrangement of the inf convolution operator is flexible enough to be applied in this situation as we explain in the following paragraph.

Observe first that the isoperimetric inequalities  
\eqref{eq:levy} and \eqref{eq:iso-product} can be reformulated in an integrated form that is more useful for our purpose. In fact, as is classical (see \textit{e.g.}\ \cite{ledoux}) \eqref{eq:levy} and \eqref{eq:iso-product} imply respectively 
$$
\mu(A_{\varepsilon})\geq\mu((A^*)_{c\varepsilon}) 
\qquad 
\mbox{and}
\qquad 
\mu_n(A_{\varepsilon})\geq\mu_n((A^*)_{c\varepsilon})
$$
where $A_{\varepsilon}:=\{x\in\mathbb{R}^{n},\exists y\in\mathbb{R}^{n}, \|x-y\|<\varepsilon\}$ and 
$A^*:=\{x\in\mathbb{R}^{n}:x_1<a\}$ with $a$ chosen such that
$\mu(A)=\mu(A^*)$ and similarly for $\mu_n$.
Now following the exact same proof of Theorem \ref{th:isoperimetric-rearrangeement} (with the cost $\phi(x,y)=t\|x-y\|$, details are left to the reader), the inf convolution operator
$$
Q_{t}f(x):=\sup_{y\in\mathbb{R}^{n}}f(y)-t\|x-y\|
$$
satisfies
$$
\mu\left(\{Q_{t}f>\lambda\}\right)
\geq
\mu\left(\{Q_{\frac{t}{c}}f_*>\lambda\}\right), \qquad t >0 
$$
and similarly for $\mu_n$. Thanks to Remark \ref{rem:transfer}, similar results hold for more general costs of the form $\phi(x,y)=\alpha(\|x-y\|)$.

\section{Spherically symmetric rearrangement of Transforms} \label{sec:transforms}

In this section we will prove a rearrangement result for some transforms that include the well known Legendre and polar transforms.

In all of this section, we consider a spherically symmetric rearrangement. In particular, $f^*$ and $A^*$ denote the spherically symmetric rearrangements of the function $f$ and of the set $A$ respectively as introduced in Section \ref{sec:notations-definitions}.

For a Borel measurable $f : \mathbb{R}^n \to \mathbb{R}$ and $\rho:\mathbb{R}\rightarrow\mathbb{R}$ an increasing function with $\rho(0)=0$, define the $\mathcal{T}$-transform of $f$,  $\mathcal{T}f:\mathbb{R}^n \to \mathbb{R} \cup\{\infty\}$
    \begin{equation*}%\label{def:legendre}
        \mathcal{T}f(x) = \sup_{y \in \mathbb{R}^n} \bigg[ \rho(\langle x , y  \rangle) -  f(y) \bigg], \qquad x \in \mathbb{R}^n
    \end{equation*}
where $\langle x , y  \rangle$ denote the Euclidean scalar product in $\mathbb{R}^n$.

Observe that when $\rho(x)=x$ the $\mathcal{T}$-transform
is the usual Legendre transform. %and the polar transform (see the end of the section). 
Such transforms were considered in the literature in relation with functional forms of the Blaschke-Santalo inequality \cite{ball86,fradelizi-meyer,lehec}.

Recall that $|\cdot|$ stands for the Lebesgue measure.  
One of our main results about rearrangement is the following.
 
\begin{theorem} \label{thm: majorization rearrangement transform}
        If $f: \mathbb{R}^n \to [0,\infty)$ is convex, even and $f(0) = 0$, then
        \[
            | \{ \mathcal{T}f \leq \lambda \} | \leq  | \{ \mathcal{T}f_* \leq \lambda \} |,
            \qquad \lambda \in \mathbb{R}.
        \]
\end{theorem}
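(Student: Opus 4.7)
My plan is to follow the template of the proof of Theorem \ref{th:isoperimetric-rearrangeement}, substituting the (absent) isoperimetric property by the Blaschke-Santalo inequality, which supplies the correct measure comparison for precisely the sets that appear in the decomposition lemma. The starting point is to recast $\mathcal{T}$ as an inf-convolution: with the cost $\phi(x,y) := -\rho(\langle x, y\rangle)$ on $\mathbb{R}^n \times \mathbb{R}^n$ one has $\mathcal{T}f = -Q_\phi f$, so $\{\mathcal{T}f \leq \lambda\} = \{Q_\phi f \geq -\lambda\}$, and taking complements in Lemma \ref{lem: decomposition} gives
\begin{equation*}
\{\mathcal{T}f \leq \lambda\} = \bigcap_{q \in S(-\lambda)} \{f < q_1\}_{\phi, q_2}^c.
\end{equation*}
The analogous identity for $f_*$ will reduce the theorem to a measure comparison of these two intersections.

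Next I analyze each intersectand. For $q_1 > 0$, the set $A := \{f < q_1\}$ is by hypothesis an origin-symmetric open convex body with $0$ in its interior, whose spherical rearrangement $A^* = r(q_1) B$ is a centered open ball with $r(q_1) = (|A|/|B|)^{1/n}$. Writing $c := \rho^{-1}(-q_2)$ (which is positive for the relevant $q$, since $\mathcal{T}f \geq \rho(0) - f(0) = 0$ forces $\{\mathcal{T}f \leq \lambda\}$ to be empty when $\lambda < 0$), the monotonicity of $\rho$ yields
\begin{equation*}
A_{\phi, q_2}^c = \{x : \sup_{y \in A}\langle x, y\rangle \leq c\} = c A^\circ, \qquad (A^*)_{\phi, q_2}^c = (c/r(q_1))\bar{B}.
\end{equation*}
Applying the Blaschke-Santalo inequality $|K|\,|K^\circ| \leq |B|^2$ to the symmetric convex body $K = A$ then gives
\begin{equation*}
|A_{\phi, q_2}^c| = c^n |A^\circ| \leq c^n |B|^2/|A| = (c/r(q_1))^n |B| = |(A^*)_{\phi, q_2}^c|,
\end{equation*}
and identifying $\{f_* < q_1\} = A^*$ via \eqref{eq: increasing rearrangement characterization} turns this into $\bigl|\{f < q_1\}_{\phi, q_2}^c\bigr| \leq \bigl|\{f_* < q_1\}_{\phi, q_2}^c\bigr|$.

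To conclude, I exploit that the rearranged complements $\{f_* < q_1\}_{\phi, q_2}^c$ are all closed balls centered at $0$, hence totally ordered under inclusion, so their intersection has measure equal to the infimum of their measures. Combined with the trivial bound $\bigl|\bigcap_q X_q\bigr| \leq \inf_q |X_q|$,
\begin{equation*}
|\{\mathcal{T}f \leq \lambda\}| \leq \inf_q \bigl|\{f < q_1\}_{\phi, q_2}^c\bigr| \leq \inf_q \bigl|\{f_* < q_1\}_{\phi, q_2}^c\bigr| = |\{\mathcal{T}f_* \leq \lambda\}|,
\end{equation*}
which is the desired conclusion.

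The main obstacle is that $(*, |\cdot|, \phi)$ is not an isoperimetric rearrangement in the sense of Definition \ref{def:isoperimetric-rearrangement} for arbitrary Borel $A$, so Theorem \ref{th:isoperimetric-rearrangeement} cannot be invoked off the shelf; the substance of the proof is the observation that the evenness and convexity of $f$ restrict the sets $\{f < q_1\}$ appearing in the decomposition to the origin-symmetric convex bodies, for which Blaschke-Santalo furnishes exactly the polar-to-polar measure comparison needed to replace the isoperimetric step.
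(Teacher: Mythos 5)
Your proposal is correct and follows the same route as the paper: recast $\mathcal{T}$ as $-Q_\phi$ for $\phi(x,y) = -\rho(\langle x,y\rangle)$, use Lemma~\ref{lem: decomposition} to write the sublevel set as a countable intersection of scaled polars $\rho^{-1}(|q_2|)\{f<q_1\}^\circ$, apply Blaschke--Santal\'o levelwise, and reassemble via the total ordering of centered balls. The only cosmetic difference is that you invoke $|K|\,|K^\circ| \leq |B|^2$ directly and carry out the arithmetic in-line, whereas the paper first restates Blaschke--Santal\'o as the rearrangement inequality $|K^\circ| \leq |(K^*)^\circ|$ and plugs that in; the content is identical.
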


The proof of the theorem will be based on 
a formulation with rearrangement of the celebrated Blaschke-Santalo inequality from convex geometry that is now recalled. 
For a Borel set $K$ of $\mathbb{R}^n$, define the support function of $K$ by $h_K(x) = \sup_{y \in K} \langle x, y \rangle$, and the polar set
    \[
        K^\circ \coloneqq \{ x: h_K(x) \leq 1 \}
    \]
    when $K$ is non-empty, and $K^\circ = \mathbb{R}^n$ otherwise.
The Blaschke-Santalo inequality asserts that, for a convex symmetric (\textit{i.e.}\ $K=-K$) non-empty set $K$ with finite volume,
        \[
            |K||K^\circ| \leq |B|^2 
        \]
where we recall that $B$ is the Euclidean unit ball of $\mathbb{R}^n$.
We refer the reader to \cite{fradelizi-23} for an historical presentation and references.

Note that since $K^*$ is a ball,
$K^*=\left( \frac{|K|}{|B|}\right)^\frac{1}{n}B$
and therfeore $(K^*)^\circ = \left( \frac{|B|}{|K|}\right)^\frac{1}{n}B$ (since in general $(\lambda K)^\circ = \lambda^{-1} K^\circ$). Hence
$|K^*||(K^*)^\circ|=|B|^2$ and by the 
Blaschke-Santalo inequality (and since $|K^*|=|K|$)
$$
|K||K^\circ| \leq |K^*||(K^*)^\circ| = |K||(K^*)^\circ| 
$$
from which we infer that the Blaschke-Santalo inequality can be reformulated equivalently as a rearrangement inequality: for a symmetric convex set $K \subseteq \mathbb{R}^n$ with finite volume,
\begin{align} \label{thm: Blaschke-Santalo rearrangement bis}
    |K^\circ| \leq | (K^*)^\circ|.
\end{align}

\begin{proof}[Proof of Theorem \ref{thm: majorization rearrangement transform}]
    Let us first note that since $f$ takes its minimum $0$ at $0$, 
        \[
            \mathcal{T}f(x) = \sup_{y \in \mathbb{R}^n} \big[ \rho(\langle x,y\rangle) - f(y) \big] \geq \rho(\langle x ,0 \rangle) - f(0) = 0.
        \]
    Thus for $\lambda < 0$, $\{ \mathcal{T} f \leq \lambda \}$ is empty and there is nothing to prove.
   Hence we assume that $\lambda \geq 0$. Then with the cost function $\phi(x,y) = -\rho(\langle x,y \rangle$),
    \begin{align*}
        \{ \mathcal{T}f \leq \lambda \} 
            &=
                \{ Q_\phi f \geq - \lambda \}
                    \\
            &=
                \{ Q_\phi f < - \lambda \}^c
                    \\
            &=
               \left( \bigcup_{q \in S(-\lambda)} \{ f < q_1 \}_{\phi,q_2} \right)^c
               & (\mbox{by Lemma } \ref{lem: decomposition})
                    \\
            &=
                \bigcap_{q \in S(-\lambda)} \{ f < q_1 \}_{\phi,q_2}^c
    \end{align*}
    where 
    $S(-\lambda) = \{q = (q_1,q_2) \in \mathbb{Q}^2 :  q_1 + q_2 < - \lambda \}$.  
    Note that since $\{ f < q_1 \}$ is empty unless $q_1 > 0$, it suffices to consider the union over $q \in S_+(-\lambda)$, where $S_+(-\lambda) = S(-\lambda) \bigcap \{(x_1,x_2): x_1 > 0 \}$. 
    Summarizing, we have
    \begin{align} \label{eq: exp transform sublevel set}
        \{ \mathcal{T} f \leq \lambda \} = \bigcap_{q \in S_+(-\lambda)} \{ f < q_1 \}_{\phi,q_2}^c .
    \end{align}
    Note that for $q \in S_+(-\lambda)$, since $q_1 \geq 0$ and $q_1 + q_2 < -\lambda$, $q_2$ is necessarily negative.
    We claim that for $\varepsilon >0$, and a Borel set $K$,
     \begin{align*} %\label{eq: enlargement-transform with dot product cost}
        (K_{\phi,-\varepsilon})^c = \rho^{-1}(\varepsilon) \ K^\circ
    \end{align*} 
    where we defined 
    $$
    \rho^{-1}(\varepsilon)=\inf \{y \in\mathbb{R}:\rho(y)\geq\varepsilon\} .
    $$
    Indeed,
    \begin{align*}
        (K_{\phi,-\varepsilon})^c
            &=
                \{ x : \exists y \in K, \phi(x,y) < - \varepsilon \}^c
                    \\
            &=
                \left\{ x: \inf_{y \in K} - \rho(\langle x,y \rangle) < - \varepsilon \right\}^c
                    \\
            %&=
             %   \left\{ x : \sup_{y \in K } \rho(\langle x,y \rangle) > \varepsilon \right\}^c
             %       \\
            &=
                \left\{ x : \sup_{y \in K} \rho(\langle x,y \rangle) \leq \varepsilon \right\}
                    \\
            &=
                \left\{ x : \sup_{y \in K} \langle x,y \rangle \leq \rho^{-1}(\varepsilon) \right\}
                    \\
            &=
                \rho^{-1}(\varepsilon) \left\{ x : \sup_{y \in K} \langle x,y \rangle \leq 1 \right\}
                    \\
            &=
                \rho^{-1}(\varepsilon) \ K^\circ.
    \end{align*}
    Applying the claim to \eqref{eq: exp transform sublevel set}, we have 
    \begin{align} \label{eq:sub-level-description}
        \{ \mathcal{T} f \leq \lambda \} = \bigcap_{q \in S_+(-\lambda)} \rho^{-1}(|q_2|) \{ f < q_1 \}^\circ .
    \end{align}
    Thus,
    \begin{align*}
        |\{ \mathcal{T} f \leq \lambda \}| 
            &=
                \left|  \bigcap_{q \in S_+(-\lambda)} \rho^{-1}(|q_2|) \{ f < q_1 \}^\circ \right|
                    \\
            &\leq 
                \inf_{q \in S_+(-\lambda)} \rho^{-1}(|q_2|)^n \ \big| \{f < q_1 \}^\circ \big|
                    \\
            &\leq 
                \inf_{q \in S_+(- \lambda)} \rho^{-1}(|q_2|)^n \ \big| (\{ f < q_1 \}^*)^\circ \big |
                & (\mbox{by Blaschke-Santalo's inequality } \eqref{thm: Blaschke-Santalo rearrangement bis})
                    \\
            &= 
                \inf_{q \in S_+(- \lambda)}  \big| \rho^{-1}(|q_2|) \ (\{ f < q_1 \}^*)^\circ \big |
                & (\mbox{by homogeneity of the measure})
                    \\
            &=
                \left|\bigcap_{q \in S_+(- \lambda)} \rho^{-1}(|q_2|)  \ (\{ f < q_1 \}^*)^\circ \right|
                    \\
            &=
                \left|\bigcap_{q \in S_+(- \lambda)} \rho^{-1}(|q_2|) \ (\{ f_* < q_1 \})^\circ \right|
                    \\
            &=
                | \{ \mathcal{T} f_* \leq \lambda \} |
    \end{align*}
    where the first inequality is $\mu( \cap_n A_n) \leq \inf_n \mu(A_n)$ combined with an application of the homogeneity of the Lebesgue measure.  
    %The second inequality is Blaschke-Santalo's inequality \eqref{thm: Blaschke-Santalo rearrangement bis}. 
    %The following equality is again the homogeneity of the Lebesgue measure. 
    The antepenultimate equality is due to the fact that we are considering an intersection of centered balls which are totally ordered. The penultimate equality is the characterization of the spherically symmetric increasing rearrangement and the final one is \eqref{eq:sub-level-description} applied to the spherically symmetric increasing rearrangement $f_*$.
\end{proof}
We will now provide two direct applications of the above theorem.  
\begin{definition} \label{def:legendre}
    For a Borel measurable $f : \mathbb{R}^n \to \mathbb{R}$ define the Legendre transform of $f$,  $\mathcal{L}f:\mathbb{R}^n \to \mathbb{R}$
    \[
        \mathcal{L}f(x) = \sup_{y \in \mathbb{R}^n} \bigg[ \langle x , y  \rangle -  f(y) \bigg].
    \]
\end{definition}
 
\begin{theorem} \label{thm: majorization rearrangement Legendre}
        If $f: \mathbb{R}^n \to [0,\infty)$ is convex even and $f(0) = 0$, then
        \[
            | \{ \mathcal{L}f \leq \lambda \} | \leq  | \{ \mathcal{L}f_* \leq \lambda \} | , \qquad \lambda \in \mathbb{R}.
        \]
\end{theorem}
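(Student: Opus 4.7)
The plan is to obtain this as an immediate corollary of Theorem \ref{thm: majorization rearrangement transform}. First I would observe that the Legendre transform is precisely the $\mathcal{T}$-transform with the choice $\rho(x) = x$. Indeed, the identity is increasing on $\mathbb{R}$ and satisfies $\rho(0) = 0$, so it satisfies the hypotheses placed on $\rho$ in the definition of $\mathcal{T}$, and with this choice Definition \ref{def:legendre} reads exactly as the definition of $\mathcal{T}f$.

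Next, since $f$ is convex, even, nonnegative and $f(0) = 0$, it satisfies the hypotheses of Theorem \ref{thm: majorization rearrangement transform}. Applying that theorem with $\rho(x) = x$ yields
\[
    | \{ \mathcal{L}f \leq \lambda \} | = | \{ \mathcal{T}f \leq \lambda \} | \leq | \{ \mathcal{T}f_* \leq \lambda \} | = | \{ \mathcal{L}f_* \leq \lambda \} |
\]
for every $\lambda \in \mathbb{R}$, which is the desired conclusion.

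There is essentially no obstacle here; the only point worth a brief sanity check is that the hypotheses on $f$ (convex, even, nonnegative, vanishing at $0$) are preserved under passing to the increasing rearrangement $f_*$ only in the sense needed inside the proof of Theorem \ref{thm: majorization rearrangement transform}, but since we are merely applying that theorem as a black box on the given $f$, no additional verification is required. Thus the statement is a specialization of the general $\mathcal{T}$-transform result to the classical Legendre case.
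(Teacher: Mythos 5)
Your proof is correct and is exactly the argument in the paper: Theorem \ref{thm: majorization rearrangement Legendre} is obtained by specializing Theorem \ref{thm: majorization rearrangement transform} to $\rho(x)=x$, which is increasing and vanishes at the origin, so that $\mathcal{T}=\mathcal{L}$. Nothing further is required.
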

\begin{proof}
    It suffices to apply the above theorem to $\rho(x)=x$.
\end{proof}

\begin{definition}\label{def:polar}
    For a Borel measurable $f:\mathbb{R}^{n}\rightarrow[0,\infty)$ that is %even, 
    convex, and takes the value zero at the origin, define the Polar transform of f to be $$
    f^\circ(x)=\sup_{y \in \mathbb{R}^n}\frac{\langle x,y \rangle-1}{f(y)}
    $$
    when $f$ is not identically zero, and $0^\circ=0$.
\end{definition}
Here, following \cite{artstein-milman}\footnote{see  more precisely \cite[Section 2.1]{artstein-milman}.},  we convey that $\frac{0}{0}=0$ and $\frac{+}{0}=+\infty$. Observe that necessarily $f^\circ(0)=0$.
As is standard in the literature, we denote the set of non-negative 
even 
convex functions on $\mathbb{R}^{n}$ taking the value zero at the origin\footnote{often called geometric convex functions.} by $\mathrm{Cvx}_0(\mathbb{R}^n)$.
%$\Tilde{C}(\mathbb{R}^{n})$.
\begin{theorem} \label{thm: majorization rearrangement Polar}
        For all $f\in\mathrm{Cvx}_0(\mathbb{R}^{n})$ it holds
        \[
            | \{f^\circ \leq \lambda \} | \leq  | \{ (f_*)^\circ \leq \lambda \} | ,
            \qquad \qquad \lambda \in \mathbb{R}.
        \] 
\end{theorem}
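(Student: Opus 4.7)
The strategy mirrors that of Theorem \ref{thm: majorization rearrangement transform}: decompose $\{f^\circ \leq \lambda\}$ as an intersection of dilated polars of sublevel sets of $f$, apply the Blaschke--Santalo inequality \eqref{thm: Blaschke-Santalo rearrangement bis} to each piece, and reassemble using that the rearranged pieces are concentric Euclidean balls. Since $f \in \mathrm{Cvx}_0(\mathbb{R}^n)$ implies $f^\circ \geq 0$ (standard given the conventions of Definition \ref{def:polar}), the case $\lambda < 0$ is trivial, so I would restrict to $\lambda \geq 0$.

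The key set identity is
\[
    \{f^\circ \leq \lambda\} \;=\; \bigcap_{t > 0} (1+\lambda t)\,\{f<t\}^\circ,
\]
which follows from the chain of equivalences: $f^\circ(x) \leq \lambda$ iff $\langle x,y\rangle \leq 1 + \lambda f(y)$ for every $y$, iff $h_{\{f<t\}}(x) \leq 1 + \lambda t$ for every $t > 0$, iff $x \in (1+\lambda t)\{f<t\}^\circ$ for every $t > 0$; here non-emptiness of $\{f<t\}$ for $t > 0$ is ensured by $f(0) = 0$, and monotonicity of $\{f<t\}$ in $t$ permits reduction of the intersection to a countable dense set of rationals if measurability is a concern. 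Combining with $|\bigcap A_t| \leq \inf_t |A_t|$, the $n$-homogeneity of Lebesgue measure, and Blaschke--Santalo \eqref{thm: Blaschke-Santalo rearrangement bis} applied to each $\{f<t\}$ (convex because $f$ is convex, symmetric because $f$ is even) produces
\[
    |\{f^\circ \leq \lambda\}| \leq \inf_{t>0} (1+\lambda t)^n |\{f<t\}^\circ| \leq \inf_{t>0} (1+\lambda t)^n |(\{f<t\}^*)^\circ| = \inf_{t>0} |(1+\lambda t)\{f_*<t\}^\circ|,
\]
where $\{f<t\}^* = \{f_*<t\}$ supplies the last equality. Since each $\{f_*<t\}$ is a centered ball, so is $(1+\lambda t)\{f_*<t\}^\circ$, and this family is totally ordered by inclusion; the infimum of measures therefore coincides with the measure of the intersection, and applying the set identity now to $f_*$ identifies this quantity with $|\{(f_*)^\circ \leq \lambda\}|$, closing the proof.

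The main obstacle is rigorous verification of the set identity together with the degenerate case in which some $\{f<t\}$ has infinite Lebesgue measure. In that case $\{f<t\}$ is a symmetric convex set containing a line, so $\{f<t\}^\circ$ lies in a proper linear subspace and has measure zero; simultaneously $(\{f<t\}^*)^\circ = \{0\}$ also has measure zero, so Blaschke--Santalo holds trivially. A secondary subtlety is that $f_*$ need not itself belong to $\mathrm{Cvx}_0(\mathbb{R}^n)$, but the formula of Definition \ref{def:polar} still defines $(f_*)^\circ$ and the set identity still applies to any Borel measurable non-negative function vanishing at the origin.
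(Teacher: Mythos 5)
Your argument is correct, but it takes a genuinely different route from the paper's. The paper does not re-run the decomposition machinery for the polar transform; instead it reduces the polar statement to the already-established Legendre statement (Theorem \ref{thm: majorization rearrangement Legendre}) via the scaling identity $\{f^\circ > \lambda\} = \lambda\,\{\mathcal{L}f > 1/\lambda\}$ together with the elementary fact $(tA)^{c} = tA^{c}$: one writes $|\{(f_*)^\circ \leq \lambda\}| = \lambda^n |\{\mathcal{L}f_* \leq 1/\lambda\}|$, applies the Legendre result, and reverses the scaling. You instead prove the polar case directly from the set identity $\{f^\circ \leq \lambda\} = \bigcap_{t>0}(1+\lambda t)\{f<t\}^\circ$, mimicking the structure of the proof of Theorem \ref{thm: majorization rearrangement transform} with this new decomposition in place of the one coming from Lemma \ref{lem: decomposition}; this is natural because $f^\circ$ is not literally a $\mathcal{T}$-transform for any $\rho$, so the general theorem cannot be invoked off the shelf. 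The paper's route is shorter and uses the Legendre case as a black box; yours is more self-contained and exposes that the polar transform obeys the same sublevel-set-intersection pattern once the affine shift $1 + \lambda t$ is introduced. Your treatment of the degenerate cases (infinite-measure sublevel sets, non-emptiness from $f(0)=0$, and that the intersection of concentric balls has measure equal to the infimum even when it is not attained) is sound and addresses the only places where the argument could break.
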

\begin{proof}
    The proof is based on the following two observations and an immediate application of Theorem \ref{thm: majorization rearrangement Legendre}. 
    One may observe the following for $f\in\mathrm{Cvx}_0(\mathbb{R}^{n})$
    \begin{enumerate}
        \item $\forall \lambda>0, \{f^\circ>\lambda\}=\lambda\{\mathcal{L}f>\frac{1}{\lambda}\}$;
        \item $\forall t\geq 0, (tA)^{c}=tA^{c}$.
    \end{enumerate} The argument is then as follows (using the homogeneity of the Lebesgue measure at the forth and fifth  line)
    \begin{equation*}
        \begin{split}
            \left|\{(f_{*})^\circ\leq\lambda\}\right|&=\left|\{(f_{*})^{\circ}>\lambda\}^{c}\right|\\
            &\stackrel{1.}{=}\left|\left(\lambda\{Lf_{*}>\frac{1}{\lambda}\}\right)^{c}\right|\\
            &\stackrel{2.}{=}\left|\lambda\{Lf_{*}>\frac{1}{\lambda}\}^{c}\right|\\
            &=\lambda^{n}\left|\{Lf_{*}\leq\frac{1}{\lambda}\}\right|\\
            &\stackrel{Th.\ \ref{thm: majorization rearrangement Legendre}}{\geq}\left|\lambda\{Lf\leq\frac{1}{\lambda}\}\right|\\
            &\stackrel{1.\& 2.}{=}\left|\{f^\circ\leq\lambda\}\right| .
        \end{split}
    \end{equation*}
\end{proof}

\section{Applications} \label{sec:applications}

In this section we collect some applications of our rearrangement results.
The first is about comparison of the solutions of a Hamilton-Jacobi equation with initial data $f$ versus $f_*$. 
In the second subsection, we provide a simple proof of a result by Ben Li \cite{li} about extremizers in functional Blaschke-Santalo type inequalities. Finally, we reduce the very recent semi-group proof by Cordero et al.
\cite{cordero} on functional Blaschke-Santalo by heat-flow, to dimension 1, therefore, simplifying their final argument 
from the use of the variance Brascamp-Lieb inequality to the trivial observation that a square is non-negative.

\subsection{Hamilton-Jacobi equation and the Hopf-Lax formula}\label{sec:HJ}

In this section, we prove a comparison result for the solution of the following Hamilton-Jacobi equation
\begin{equation} \label{eq:hj}
\begin{cases}
u_t(x) + A(\nabla u_t) = 0 & \mbox{in } \mathbf{R}^n \times (0 ,\infty) \\
u_0=f & \mbox{on } \mathbf{R}^n \times \{t =0\} 
\end{cases}
\end{equation}
with $A \colon \mathbb{R}^n \to \mathbb{R}$ a potential defined as $A(x)=H(\|x\|)$, with $H$ convex, smooth and satisfying $\lim_{x \to \infty} H(x)/x=+\infty$. This is a first order parabolic equation with initial data (at $t=0$) $f$.
Above $u(t,x)$ (for short $u_t(x)$) is the unknown,  function of the two variables $t \in [0,\infty)$ and $x \in \mathbb{R}^n$ and the gradient is acting on the space variable. Here, $\|\cdot\|$ denotes the usual Euclidean norm.

Such PDEs are well understood and we refer to the book by Evans \cite{evans} for an introduction, a historical presentation, and references and to the book by Villani \cite{villani} for connections with the transport of mass phenomenon.
In particular, if $f$ is assumed to be Lipschitz continuous (\textit{i.e.}\ such that $\sup_{\genfrac{}{}{0pt}{}{x,y\in \mathbb{R}^n}{x\neq y}} \frac{|f(x)-f(y)|}{\|x-y\|} < \infty$), the following function 
$$
Q_tf(x) \coloneqq \inf_{y \in \mathbb{R}^n} \left\{ f(y) + t\mathcal{L}H \left(\frac{|x-y|}{t} \right)  \right\}, \qquad t \geq 0
$$
solves the Hamilton-Jacobi equation in the sense that
$Q_tf$ is Lipschitz continuous, differentiable almost everywhere in $\mathbb{R}^n \times(0,\infty)$, it satisfies $u_t(x) + A(\nabla u_t) = 0$ almost everywhere in $\mathbf{R}^n \times (0 ,\infty)$
and $Q_0f=f$ (such a solution is called a generalized solution). Furthermore, it is the unique solution of \eqref{eq:hj} in the viscosity sense. Above $\mathcal{L}H$ is the Legendre transform of $H$ in the sense of Definition \ref{def:legendre}.
The formula that defines $Q_tf$ is known as the Hopf-Lax formula.

Our aim is to compare the solution of \eqref{eq:hj} to the solution of the rearranged problem
\begin{equation} \label{eq:hj-rearranged}
\begin{cases}
v_t(x) + A(\nabla v_t) = 0 & \mbox{in } \mathbf{R}^n \times (0 ,\infty) \\
v_0=f_* & \mbox{on } \mathbf{R}^n \times \{t =0\} 
\end{cases}
\end{equation}
where $f_*$ is the spherically symmetric increasing rearrangement of $f$.

The first occurrence of a comparison problem for Hamilton Jacobi equations apparently appeared in a paper by Ferrone, Posteraro and Volpicelli \cite{FPV} where the potential $H$ is essentially $H(x)=\|x\|$ (see also \cite{giarrusso-nunziante,giarrusso,mercaldo} for related result), followed by 
Alvino, Ferrone, Trombetti and Lions \cite{AFTL} who extended the comparison results of \cite{FPV} to a larger class of $H$ that are assumed to be $1$ homogeneous ($H(tx)=|t|H(x), t \in \mathbb{R}$, $x \in \mathbb{R}^n$) and satisfying $a\|x\| \leq H(x) \leq b\|x\|$ for some constants $a,b >0$. To be really precise, these authors deal with 
Problem \eqref{eq:hj} with a term $A(t,\nabla u_t)$ that satisfies   $A(t,x) \geq H(x)$ with $H(x)=\|x\|$ in \cite{FPV} and $H$ $1$-homogeneous and satisfying $a\|x\| \leq H(x) \leq b\|x\|$ in \cite{AFTL}. More importantly the initial data for the rearranged problem considered in this series of papers is the symmetric decreasing rearrangement $f^*$ rather than $f_*$, and the equation is considered on a general domain $\Omega \subset \mathbb{R}^n$.  Their technique relies on a careful use of the co-area formula together with the classical Euclidean isoperimetric inequality (or some of its variant through more general Brunn-Minkowski's inequality \cite{busemann}, see \cite[Sections 2 and 4.2]{AFTL}).

Here instead, we will exploit the fact the the solution of \eqref{eq:hj} is given by the explicit Hopf-Lax formula to prove a comparison theorem.
In fact, we don't need $H$ to be $1$-homogeneous but rather that it is convex and super linear at infinity
(to guarantee that $Q_tf$ is a generalized solution of Problem \eqref{eq:hj}). Therefore, our results go in a different direction than the above cited references and deal with a different class of potentials $H$.

Before moving to our comparison result for the first order  Hamilton-Jacobi Problem \eqref{eq:hj}, let us briefly mention that the literature is huge on elliptic operators. After the pioneering work of Talenti \cite{talenti}, there have been a continuous and extended production of papers dealing with rearrangement and comparison theorems for solution of elliptic operators (the Laplacian being the model example). Talenti's argument relies, again, as main arguments, on the co-area formula together with the classical Isoperimetric inequality in the Euclidean space. One key observation by Talenti is also that, starting from a rearranged function leads to an equality in the successive inequalities used in the proof of the comparison result, therefore leading to explicit constants in various functional inequalities, with possibly a full description of the extremizers. It is impossible to cite and describe all papers that built upon Talenti's approach as the reader can convince him/herself by consulting the list of more than 200 references published before 2000 contained in \cite{trombetti}\footnote{It is also very hard to cite just a few.}. Let us just mention that people were interested in comparison results for more general elliptic PDE, linear and non linear, degenerate,etc.

\medskip

To make use of our general argument, we consider 
$\mathcal{X}=\mathcal{Y}=\mathbb{R}^n$ equipped with the Lebesgue measure. The Hopf-Lax formula above corresponds to the infimum convolution
$Q_\phi$ given in Definition \ref{def:inf-convolution}
with $\phi (x,y)= t  \mathcal{L}H \left( \frac{\|x-y\|}{t} \right)$, $x,y \in \mathbb{R}^n$, $t>0$ being a fixed parameter. Rearrangement are understood in the usual sense (symmetric decreasing with respect to the Lebesgue measure). Then the usual isoperimetric inequality guarantees that the triple $*$, the Lebesgue measure and $\phi$ is an isoperimetric rearrangement in the sense of Definition  \ref{def:isoperimetric-rearrangement}.
%{\color{red}Cyril: I'm not totally sure of this...}

One of our main theorem is the following.

\begin{theorem} \label{thm:HJ}
Let $H \colon \mathbb{R}_+ \to \mathbb{R}$ be smooth, convex and super-linear ($H(x)/x \to \infty$ when $x \to \infty$). Let $f$ be a Lipschitz continuous function on $\mathbb{R}^n$ and $f_*$ be its symmetric increasing rearrangement.

Denote by $u_t$ and $v_t$ the solution of Problem 
\eqref{eq:hj} and \eqref{eq:hj-rearranged} respectively. Then, for all $\lambda \in \mathbb{R}$, it holds
$$
|\{ v_t < \lambda \}| \leq |\{u_t < \lambda\}| .
$$
%In particular
\end{theorem}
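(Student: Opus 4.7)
The plan is to recognize that Theorem~\ref{thm:HJ} is a direct consequence of the inf-convolution rearrangement inequality \eqref{eq:inf-conv}, once the two PDE solutions are identified with their Hopf--Lax representations.

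First, I would invoke the classical theory recalled in the discussion preceding the theorem to express
\[
u_t(x) = Q_t f(x) = \inf_{y \in \mathbb{R}^n}\Bigl\{ f(y) + t\,\mathcal{L}H\Bigl(\tfrac{\|x-y\|}{t}\Bigr) \Bigr\},
\qquad
v_t(x) = Q_t f_*(x),
\]
as the unique (viscosity, equivalently generalized) solutions of \eqref{eq:hj} and \eqref{eq:hj-rearranged}. Here one must briefly check that the spherically symmetric increasing rearrangement $f_* = -\log(e^{-f})^*$ remains Lipschitz, so that the same uniqueness theory applies on the rearranged side.

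Next, I would place this in the framework of Section~\ref{sec:general} by setting $\phi(x,y) := t\,\mathcal{L}H(\|x-y\|/t)$, so that $Q_t = Q_\phi$ in the sense of Definition~\ref{def:inf-convolution}. Since $H$ is smooth, convex and super-linear on $\mathbb{R}_+$, its Legendre transform is convex and monotone increasing on $[0,\infty)$: indeed, $\mathcal{L}H(r) = \sup_{s \geq 0}( sr - H(s))$ is a supremum of affine functions each increasing in $r \geq 0$. The cost $\phi$ therefore falls exactly into the template $tG(\|x-y\|/t)$ with $G = \mathcal{L}H$ convex increasing discussed right after Theorem~\ref{th:isoperimetric-rearrangeement}, and by the Brunn--Minkowski inequality $(*,\mathrm{Leb},\phi)$ is an isoperimetric rearrangement. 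Applying \eqref{eq:inf-conv} to this $\phi$ then yields
\[
|\{u_t < \lambda\}| = |\{Q_\phi f < \lambda\}| \;\geq\; |\{Q_\phi f_* < \lambda\}| = |\{v_t < \lambda\}|
\]
for every $\lambda \in \mathbb{R}$, which is precisely the claim.

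The only delicate point I foresee is the PDE interpretation on the rearranged side, namely verifying that $Q_t f_*$ is genuinely the unique viscosity solution of \eqref{eq:hj-rearranged}; this reduces to confirming enough regularity of $f_*$ for the Hopf--Lax uniqueness theory to apply. The measure-theoretic content itself is automatic: once the solutions are identified as inf-convolutions against a radial convex increasing cost, the desired comparison of sub-level sets is an immediate instance of Theorem~\ref{th:isoperimetric-rearrangeement}, with no further isoperimetric work required.
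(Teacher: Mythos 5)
Your proposal follows exactly the paper's route: write $u_t = Q_\phi f$ and $v_t = Q_\phi f_*$ via the Hopf--Lax formula with $\phi(x,y) = t\,\mathcal{L}H(\|x-y\|/t)$, note that $G = \mathcal{L}H$ is convex increasing on $[0,\infty)$ so $(*,\mathrm{Leb},\phi)$ is an isoperimetric rearrangement, and invoke \eqref{eq:inf-conv}. That part is correct and is essentially the paper's proof verbatim.

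However, the step you flag as a ``delicate point to be briefly checked'' --- that $f_*$ is Lipschitz, so that $Q_\phi f_*$ is indeed the (unique viscosity) solution of the rearranged problem \eqref{eq:hj-rearranged} --- is in fact the only non-trivial content of the argument beyond citing Theorem \ref{th:isoperimetric-rearrangeement}, and you do not prove it. The paper dedicates a separate Lemma \ref{lem:Lipschitz} to it: one first observes that $f$ is $L$-Lipschitz if and only if $\{f<\lambda\}_\varepsilon \subseteq \{f<\lambda+L\varepsilon\}$ for all $\lambda$ and $\varepsilon>0$; since $\{f_*<\lambda\}_\varepsilon$ and $\{f_*<\lambda+L\varepsilon\} = \{f<\lambda+L\varepsilon\}^*$ are both centered balls, the inclusion reduces to a volume comparison, which follows from the equality case in Brunn--Minkowski applied to $\{f<\lambda\}^* + \varepsilon B$, then Brunn--Minkowski applied to $\{f<\lambda\}+\varepsilon B$, then the Lipschitz hypothesis on $f$. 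As written, your proposal leaves the identification $v_t = Q_\phi f_*$ unjustified, so the proof is incomplete rather than wrong; filling it requires precisely this lemma (or some equivalent argument showing rearrangement preserves the modulus of continuity).
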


%{\color{red}Cyril: is the conclusion implies the $u_t$ majorizes $v_t$ in the convex order?}

%{\color{red}Cyril: in the paper by Ferone et al, Alvino et al., the comparison is between $u_t^*$ and $v_t$. So to say between $(Q_\phi f)^*$ and $Q_\phi f^*$. Are we missing something here? }

\begin{proof}
From Theorem \ref{th:isoperimetric-rearrangeement} (see \eqref{eq:inf-conv}) we know that
$$
|\{ Q_\phi f_* < \lambda \}| \leq |\{Q_\phi f < \lambda\}| .
$$
The expected result follows at once since by construction $u_t=Q_\phi f$ and $v_t=Q_\phi f_*$ thanks to Lemma \ref{lem:Lipschitz} below (that guarantees that $f_*$ is Lipschitz continuous and therefore that $Q_\phi f_*$ is indeed solution of the rearranged Hamilton-Jacobi equation.
\end{proof}

The proof of Theorem \ref{thm:HJ} relied on the fact that Lipschitz continuity is preserved by rearrangement, a property that is of independent interest and that is most probaly well-known. We give a proof for completeness. Recall that a function on $\mathbb{R}^n$ is $L$-Lipschitz (or Lipschitz continuous with Lipschitz constant $L$) if $\sup_{x \neq y}\frac{|f(x)-f(y)|}{\|x-y\|} \leq L$.

\begin{lemma} \label{lem:Lipschitz}
Let $f \colon \mathbb{R}^n \to \mathbb{R}$ be $L$-Lipschitz with $L \in (0,\infty)$. Then its symmetric increasing rearrangement $f_*$ is also $L$-Lipschitz.
\end{lemma}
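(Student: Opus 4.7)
The plan is to translate the $L$-Lipschitz property of $f$ into a quantitative statement about how fast the Lebesgue measures of the sublevel sets $\{f < \lambda\}$ increase with $\lambda$, transfer this to the rearranged sublevel sets via Brunn-Minkowski (the integrated form of the Euclidean isoperimetric inequality recalled in Section \ref{sec:notations-definitions}), and then read off the Lipschitz constant of $f_*$ from the fact that $\{f_* < \lambda\}$ is a centered Euclidean ball whose radius is controlled by that procedure.

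First I would observe that the Lipschitz assumption on $f$ gives at once
$$
\{f < \lambda\}_\varepsilon \subseteq \{f < \lambda + L\varepsilon\}, \qquad \lambda \in \mathbb{R},\ \varepsilon > 0,
$$
since $y \in \{f < \lambda\}$ and $\|x-y\| < \varepsilon$ imply $f(x) < f(y) + L\|x-y\| < \lambda + L\varepsilon$. Writing $r(\mu) := (|\{f < \mu\}|/|B|)^{1/n}$ for the radius of the Euclidean ball $\{f < \mu\}^*$, the characterization \eqref{eq: increasing rearrangement characterization} of the increasing rearrangement reads $\{f_* < \mu\} = B_{r(\mu)}$. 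Taking Lebesgue measures in the displayed inclusion and invoking Brunn-Minkowski in the form $|A_\varepsilon| \geq |(A^*)_\varepsilon|$ (which for $A = \{f < \lambda\}$ reduces to $|A_\varepsilon| \geq |B_{r(\lambda)+\varepsilon}|$) yields
$$
|B|\,r(\lambda + L\varepsilon)^n \;=\; |\{f < \lambda + L\varepsilon\}| \;\geq\; |\{f<\lambda\}_\varepsilon| \;\geq\; |B|\,(r(\lambda)+\varepsilon)^n,
$$
and hence $r(\lambda + L\varepsilon) \geq r(\lambda) + \varepsilon$ for all $\lambda \in \mathbb{R}$ and $\varepsilon > 0$. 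In terms of $f_*$ this is exactly the inclusion
$$
\{f_* < \lambda\}_\varepsilon \;=\; B_{r(\lambda)+\varepsilon} \;\subseteq\; B_{r(\lambda + L\varepsilon)} \;=\; \{f_* < \lambda + L\varepsilon\}.
$$

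To conclude, for fixed $x, y \in \mathbb{R}^n$ and $\delta > 0$, setting $\lambda = f_*(x) + \delta$ and $\varepsilon = \|x-y\| + \delta$ gives $x \in \{f_* < \lambda\}$ and $y \in \{f_* < \lambda\}_\varepsilon \subseteq \{f_* < \lambda + L\varepsilon\}$, whence $f_*(y) < f_*(x) + \delta + L(\|x-y\| + \delta)$; letting $\delta \to 0$ and swapping the roles of $x$ and $y$ delivers $|f_*(x) - f_*(y)| \leq L\|x-y\|$. I do not anticipate any serious obstacle: the only point that asks for minor care is the bookkeeping of strict versus non-strict inequalities in the enlargement and in the definition of $r$, which is absorbed by the $\delta \to 0$ limit. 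Conceptually, the argument reduces the $n$-dimensional Lipschitz statement to the monotonicity of the one-dimensional radius function $r(\lambda)$, which is natural because $f_*$ is radial and non-decreasing in $\|x\|$.
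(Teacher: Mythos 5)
Your proof is correct and follows essentially the same route as the paper's: translate $L$-Lipschitz into the sublevel-set inclusion $\{f<\lambda\}_\varepsilon \subseteq \{f<\lambda+L\varepsilon\}$, push it through Brunn--Minkowski to the rearranged (centered-ball) sublevel sets, and read the Lipschitz constant back off. The paper packages the back-and-forth translation as a stated equivalence and writes out the Brunn--Minkowski chain at the level of $n$-th roots of volumes rather than via your radius function $r(\mu)$, but these are presentational differences only.
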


\begin{proof}
   We follow some ideas from \cite{MDA25}.
   We claim that a function $f$ is $L$-Lipschitz if and only if for any $\lambda \in \mathbb{R}$, any $\varepsilon>0$, 
   $$
   \{f < \lambda\}_\varepsilon \subset \{f < \lambda +L\varepsilon\},
   $$
   where we recall that $A_\varepsilon=A+\varepsilon B$ with $B$ the Euclidean unit ball. Strict inequality is needed. Indeed assume that $f$ is $L$-Lipschitz. Then if $x \in \{f < \lambda\}_\varepsilon$, by definition, there exists
   $y$ with $f(y)<\lambda$ such that $\|x-y\| < \varepsilon$. In particular,
   $$
   f(x) 
   = 
   f(y) + f(x) - f(y)  
   <
   \lambda + 
   L\|x-y\| 
   <
   \lambda + L \varepsilon  
   $$
which guarantees that $x \in \{f < \lambda +L\varepsilon\}$.

Assume reciprocally that $\{f < \lambda\}_\varepsilon \subset \{f < \lambda +L\varepsilon\}$ and take $x \neq y$. Fix $\delta,\eta>0$. By definition of the enlargement, $x \in \{ f < f(y)+\eta\}_{\|x-y\|+\delta}$ (because $f(y)<f(y)+\eta$). Therefore 
$f(x) < f(y)+\eta + L(\|x-y\|+\delta)$. By symmetry in $x,y$, we conclude that, for any $\delta,\eta>0$ it holds 
$|f(x)-f(y)| < \eta + L(\|x-y\|+\delta)$ from which we get that $f$ is $L$-Lipschitz and therefore the claim.

Now the key observation is given in \eqref{eq: increasing rearrangement characterization} as $\{f_* < \lambda\}=\{f<\lambda\}^*$. Fix $\lambda \in \mathbb{R}$, we will prove that 
$\{f_* < \lambda\}_\varepsilon \subset \{f_* < \lambda +L\varepsilon\}=\{f < \lambda +L\varepsilon\}^*$.
Observe that the set 
$$
\{f_* < \lambda\}_\varepsilon =
\{f_* < \lambda\} + \varepsilon B
=
\{f < \lambda\}^* + \varepsilon B
$$ 
is a centered ball. Since  $\{f < \lambda +L\varepsilon\}^*$ is also a centered ball, the desired inclusion 
$\{f_* < \lambda\}_\varepsilon \subset \{f < \lambda +L\varepsilon\}^*$ will follow by comparison of the volume of these two balls. Now, by the equality case in the Brunn-Minkowski's inequality and then Brunn-Minkowski's inequality (recall that $|\cdot|$ denotes the volume with respect to the Lebesgue measure), it holds
\begin{align*}
|\{ f_* < \lambda\}_\varepsilon \}|^\frac{1}{n} 
& = |\{ f < \lambda\}^* + \varepsilon B |^\frac{1}{n} \\\
& = 
|\{ f < \lambda\}^*|^\frac{1}{n} + \varepsilon |B|^\frac{1}{n} & \mbox{(equality case in Brunn-Minkoswki)} \\
& 
=
|\{ f < \lambda\}|^\frac{1}{n} + \varepsilon |B|^\frac{1}{n}  & \mbox{(by construction of the rearrangement)} \\
& \leq 
|\{ f < \lambda\}+ \varepsilon B|^\frac{1}{n} & \mbox{(Brunn-Minkoswki)} \\
&= 
|\{ f < \lambda\}_\varepsilon|^\frac{1}{n} \\
& \leq 
|\{ f < \lambda+L\varepsilon\}|^\frac{1}{n}
& \mbox{(Since } f \mbox{ is } L-\mbox{Lipschitz)} \\
& =
|\{ f < \lambda+L\varepsilon\}^*|^\frac{1}{n} & \mbox{(by construction of the rearrangement)} \\
& =
|\{ f_* < \lambda+L\varepsilon\}|^\frac{1}{n}
& \mbox{(by \eqref{eq: increasing rearrangement characterization})}.
\end{align*}
The volume are therefore comparable and hence $\{f_* < \lambda\}_\varepsilon \subset \{f_* < \lambda +L\varepsilon\}$ as announced, leading to the desired conclusion of the lemma. 
% {\color{red}Devraj: Actually I really didn't have anything interesting to say but I was solely keen on asking a question. I know that initially we had spoken on talking about a comparison on (12) and (13) where the space variable x belongs to the sphere $S^{n-1}$. Am i correct in assuming that we have to restrict our discussion to $\mathbb{R}^n$ as lemma 5.2 may not necessarily hold for f being L-Lipschitz on the sphere? It seems like the proof requires some Brunn-Minkowski inequality which I dont think we have on the sphere..}
\end{proof}

\subsection{Extremal Functions in the Functional Santalo Inequality}

In all the following we will use $\mathcal{A}$ to denote either the Legendre transform ($\mathcal{L}$, Definition \ref{def:legendre}) or the Polar transform ($^\circ$, Definition \ref{def:polar}). Our aim is to recover, in a shorter way, the following result of Ben Li, using rearrangement. Recall that we denote $\mathrm{Cvx}_0(\mathbb{R}^{n})$ the set of non-negative even convex functions on $\mathbb{R}^{n}$ taking the value zero at at the origin. 
Let $\omega_{n}=\pi^{n/2}/\Gamma(\frac{n}{2}+1)$ denote the volume of the $n$ dimensional Euclidean unit ball.

\begin{theorem}[\cite{li}] \label{thm: existence of extremizer}
    There exists a radially symmetric extremizer of the following variational problem 
    $$
    \sup_{f\in\mathrm{Cvx}_0(\mathbb{R}^{n})}\int e^{-f}\int e^{-\mathcal{A}f}
    $$ 
    where the integrals are implicitly assumed to be taken with respect to the Lebesgue measure on $\mathbb{R}^{n}$. 
% \begin{enumerate}
%     \item For $\mathcal{A}=\mathcal{L}$, $\Lambda$ is the class of non-negative convex functions with f(0)=0.
%     \item For $\mathcal{A}=\circ$,  $\Lambda = \mathrm{Cvx}_0(\mathbb{R}^{n})$.
% \end{enumerate}
In particular
$$
    \sup_{f\in \mathrm{Cvx}_0(\mathbb{R}^{n})} 
    \int e^{-f}\int e^{-\mathcal{L}f}
    =
    (n\omega_{n})^2 
    \sup_{\Psi} 
    \int_0^\infty e^{-\Psi(r)}r^{n-1}dr 
    \int_0^\infty e^{-\mathcal{L}\Psi}r^{n-1}dr 
$$
and
$$
    \sup_{f\in \mathrm{Cvx}_0(\mathbb{R}^{n})} 
    \int e^{-f}\int e^{-f^\circ}
    =
    (n\omega_{n})^2 
    \sup_{\Psi} 
    \int_0^\infty e^{-\Psi(r)}r^{n-1}dr 
    \int_0^\infty e^{-\Psi^\circ(r)}r^{n-1}dr
$$
where the supremum runs over all convex non-decreasing function $\Psi \colon [0,\infty) \to [0,\infty)$ with $\Psi(0)=0$, and 
$\mathcal{L}\Psi(r)=\sup_{s>0}\{rs-\Psi(s)\}$, 
$\Psi^\circ(r) \coloneqq \sup_{s>0} \frac{rs-1}{\Psi(s)}$. 
\end{theorem}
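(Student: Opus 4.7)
The approach is to apply the rearrangement inequalities from Theorems \ref{thm: majorization rearrangement Legendre} and \ref{thm: majorization rearrangement Polar} to reduce the $n$-dimensional variational problem to a one-dimensional one, after which existence follows from a compactness argument.

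First, I would show that the class $\mathrm{Cvx}_0(\mathbb{R}^n)$ is stable under the symmetric increasing rearrangement $f \mapsto f_*$. That $f_*$ is non-negative and even with $f_*(0)=0$ is immediate from $\{f_* < \lambda\} = \{f<\lambda\}^*$ together with $0 \in \{f<\lambda\}$ for every $\lambda>0$. Convexity is the substantive point: since $f$ is convex, each sublevel set $\{f < \lambda\}$ is convex, so Brunn-Minkowski forces $\lambda \mapsto |\{f<\lambda\}|^{1/n}$ to be concave in $\lambda$. The radial profile $\Psi$ defined by $f_*(x) = \Psi(\|x\|)$ is essentially the pseudo-inverse of this concave function, hence convex and non-decreasing on $[0,\infty)$ with $\Psi(0)=0$, and $f_*(x)=\Psi(\|x\|)$ is convex as the composition of a convex non-decreasing function with a norm.

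Second, the rearrangement results yield monotonicity of the functional under $f \mapsto f_*$. Equimeasurability gives $\int e^{-f} = \int e^{-f_*}$ directly. For the other factor, Theorem \ref{thm: majorization rearrangement Legendre} (or Theorem \ref{thm: majorization rearrangement Polar}) yields $|\{\mathcal{A} f \leq \lambda\}| \leq |\{\mathcal{A} f_* \leq \lambda\}|$ for every $\lambda$, and the layer cake formula $\int e^{-g}\,dx = \int_0^\infty |\{g < -\log t\}|\,dt$ applied to $g = \mathcal{A}f$ and $g = \mathcal{A}f_*$ delivers $\int e^{-\mathcal{A}f} \leq \int e^{-\mathcal{A}f_*}$. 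Consequently, the supremum over $\mathrm{Cvx}_0(\mathbb{R}^n)$ coincides with the supremum restricted to radial $f$ of the form $f(x) = \Psi(\|x\|)$.

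Finally, for such a radial $f = \Psi \circ \|\cdot\|$ one checks directly that $\mathcal{L}f(y) = \sup_{r\geq 0}(r\|y\| - \Psi(r)) = \mathcal{L}\Psi(\|y\|)$ and similarly $f^\circ(y) = \Psi^\circ(\|y\|)$, by aligning the supremum over $x$ with the ray through $y$. Polar coordinates then convert $\int e^{-f}\,dx$ and $\int e^{-\mathcal{A}f}\,dx$ into $n\omega_n \int_0^\infty e^{-\Psi(r)} r^{n-1}\, dr$ and $n\omega_n \int_0^\infty e^{-\mathcal{A}\Psi(r)} r^{n-1}\, dr$ respectively, giving both claimed product formulas; existence of an extremizing $\Psi$ follows from a Helly-type selection argument applied to a normalized maximizing sequence of convex non-decreasing $\Psi_k$ with $\Psi_k(0)=0$, using finiteness of the supremum (itself guaranteed by the functional Blaschke-Santalo inequality). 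The main obstacle I anticipate is the convexity of $f_*$ in the first step, since this is the only place the argument invokes the full Brunn-Minkowski inequality rather than the isoperimetric statement already packaged into the rearrangement theorems; care is also needed when $\lambda \mapsto |\{f<\lambda\}|$ has flat pieces or jumps, but these occur on at most a countable set and are handled by passing to monotone limits.
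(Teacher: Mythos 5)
Your proposal follows essentially the same route as the paper: equimeasurability and the layer cake decomposition together with Theorems~\ref{thm: majorization rearrangement Legendre} and \ref{thm: majorization rearrangement Polar} reduce the supremum to radial functions, polar coordinates give the one-dimensional formulation, and a compactness argument yields an extremizing $\Psi$. There are, however, two points where your sketch and the paper diverge in emphasis, and one is a real gap in what you wrote.

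On the plus side, you explicitly verify that $\mathrm{Cvx}_0(\mathbb{R}^n)$ is stable under $f \mapsto f_*$. The paper simply writes ``since $f_* \in \mathrm{Cvx}_0(\mathbb{R}^n)$'' and moves on; your argument via Brunn--Minkowski (convexity of sublevel sets of $f$ gives concavity of $\lambda \mapsto |\{f<\lambda\}|^{1/n}$, hence convexity of the radial profile $\Psi$) is correct and is the right justification for that step.

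The gap is in the existence argument. You write that an extremizer follows from ``a Helly-type selection argument applied to a normalized maximizing sequence'' plus finiteness of the supremum. Helly alone does not finish the job: after extracting a pointwise-convergent subsequence $e^{-\Psi_k} \to e^{-\Psi}$, you still need (a) that the normalization $\int_0^\infty e^{-\Psi_k}r^{n-1}\,dr = 1$ passes to the limit, so the limit $\Psi$ is still in the normalized class and the first factor does not silently collapse to zero, and (b) that $\Psi \mapsto \int_0^\infty e^{-\mathcal{A}\Psi}r^{n-1}\,dr$ is upper semicontinuous along that convergence. Point (a) is where mass escaping to infinity must be ruled out; this is exactly what the paper's Lemma~\ref{lem:tight} (tightness, via the uniform lower bound $\Psi(r) \geq ar$ for $r\geq 2$ from \eqref{eq:psi}) and Lemma~\ref{lem:absolutely-continuous} (identification of the weak limit and pointwise convergence of $\Psi_k$) secure, and they feed into Lemma~\ref{lem:compact}. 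Point (b) is the Fatou argument combined with $\liminf_m \mathcal{A}\Psi_m \geq \mathcal{A}\Psi$. Neither is automatic from Helly, and your sketch leaves both unaddressed.

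Finally, invoking the functional Blaschke--Santal\'o inequality to obtain finiteness of the supremum is unnecessary and sits oddly with the paper's logic: the compactness-plus-u.s.c.\ argument delivers attainment (and hence finiteness) without any appeal to the functional inequality, and the paper goes on to use Theorem~\ref{thm: existence of extremizer} as a reduction step in a new derivation of that very inequality.
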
 

The theorem is a direct consequence of a series of elementary lemmas.

Let $\mathcal{M}$ be the set of absolutely continuous measures on $[0,\infty)$ (absolutely continuous with respect to the Lebesgue measure) whose density is log-concave. Furthermore, we define the class $\mathcal{S}$ in the following manner 
$$
\mathcal{S} :=\{\mu \in \mathcal{M} :  \frac{d\mu}{dx}=e^{-\Psi(x)}
\mbox{ with } \Psi(0)=0, \Psi \mbox{ is non-decreasing  and } \int_{0}^{\infty}\!e^{-\Psi(r)}r^{n-1}dr=1\} .
$$
Note that log-concave just says that $\Psi \colon [0,\infty) \to \mathbb{R}$ is convex. We need some preparation. 
Since $\Psi$ is non-decreasing, it holds  
    $$
    1
    =
    \int_{0}^{\infty}e^{-\Psi(r)}r^{n-1}dr\geq\int_{0}^{2}e^{-\Psi(r)}r^{n-1}dr
    \geq 
    e^{-\Psi(2)}\frac{2^{n}}{n} .
    $$ 
    In particular $\Psi(2) \geq \log\left(\frac{2^{n}}{n}\right)$. Set 
    \begin{equation} \label{eq:def_a}
      a \coloneqq 
      \frac{1}{2}\log\left(\frac{2^{n}}{n}\right)    
    \end{equation}
    and notice that $a>0$ does not depend on $\Psi$. Now, observe that since $\Psi$ is convex and $\Psi(0)=0$, the map $(0,\infty) \ni r \mapsto \Psi(r)/r$ is non-decreasing so that, for any $r \geq 2$, (again, uniformly in $\Psi$)
    \begin{equation} \label{eq:psi}
    \Psi(r) \geq \frac{\Psi(2)}{2}r \geq ar .
    \end{equation}

\begin{lemma} \label{lem:tight}
    The set $\mathcal{S}$ is tight.
\end{lemma}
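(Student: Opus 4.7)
The plan is to convert the uniform linear lower bound $\Psi(r)\geq ar$ for $r\geq 2$ derived just above in \eqref{eq:psi}, with $a$ from \eqref{eq:def_a} depending only on $n$, into uniform tail control for the measures in $\mathcal{S}$. Tightness then follows from a one-line comparison with a fixed exponential measure.

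Concretely, I would fix any $\mu\in\mathcal{S}$ with density $e^{-\Psi}$ and any $R\geq 2$, and use \eqref{eq:psi} to estimate
$$
\mu\big([R,\infty)\big)=\int_R^\infty e^{-\Psi(r)}\,dr\leq \int_R^\infty e^{-ar}\,dr=\frac{e^{-aR}}{a}.
$$
The right-hand side is independent of $\Psi$ and tends to $0$ as $R\to\infty$. Given $\varepsilon>0$, I would pick $R_\varepsilon\geq 2$ so that $e^{-aR_\varepsilon}/a<\varepsilon$, and observe that the compact set $K_\varepsilon=[0,R_\varepsilon]$ satisfies $\sup_{\mu\in\mathcal{S}}\mu(K_\varepsilon^c)<\varepsilon$, which is the desired tightness property. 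Combined with the trivial bound $\int_0^2 e^{-\Psi(r)}\,dr\leq 2$, the same estimate also yields a uniform bound on the total masses $\mu([0,\infty))\leq 2+e^{-2a}/a$, so the family is relatively compact in the weak topology.

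I do not anticipate any real obstacle: the substance of the lemma is already contained in the extraction of the uniform lower bound \eqref{eq:psi} from convexity, monotonicity and the normalisation $\int_0^\infty e^{-\Psi(r)}r^{n-1}\,dr=1$, which was carried out immediately before the lemma statement. The proof is then little more than an integration of that bound, and the fact that $a$ depends only on the dimension $n$ is precisely what makes the tail estimate \textbf{uniform} over $\mathcal{S}$.
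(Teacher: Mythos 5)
Your proof is correct and follows exactly the same route as the paper: exploit the uniform bound $\Psi(r)\geq ar$ for $r\geq 2$ from \eqref{eq:psi} (with $a$ from \eqref{eq:def_a} depending only on $n$), integrate it to control the tail $\int_R^\infty e^{-\Psi(r)}\,dr\leq e^{-aR}/a$, and choose $R_\varepsilon$ accordingly. The extra remark about the uniform total-mass bound is a harmless addition not present in the paper, but the core argument is identical.
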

\begin{proof}
    To prove tightness we need to show that, for all 
    $\varepsilon>0$, there exists a compact set $K_{\varepsilon} \subset [0,\infty)$ such that $\mu([0,\infty)\setminus K_{\varepsilon})<\varepsilon$ for all $\mu\in\mathcal{S}$.  
    
    Fix $\varepsilon >0$ and take $K_{\varepsilon}=[0,b]$ with $b \geq 2$ chosen so that $\frac{e^{-ab}}{a} < \varepsilon$ (and $a$ defined in \eqref{eq:def_a}). Then, 
    %for $\varepsilon \in (0,1/2]$ 
    it holds
    $$
    \mu([0,\infty)\setminus K_{\varepsilon})
    =
    \int_{b}^{\infty}e^{-\Psi(r)}dr
    \leq
    \int_{b}^{\infty}e^{-ar}dr
    =
    \frac{e^{-ab}}{a} 
    <
    \epsilon 
    $$
    as expected.
\end{proof} 

\begin{lemma}\label{lem:absolutely-continuous}
If $\mu_{n}\in\mathcal{S}$ converges weakly to a measure $\mu$,  then $\mu$ is absolutely continuous with respect to the Lebesgue measure. Moreover, if $e^{-\Psi_n}$ denotes the density of $\mu_n$ and $e^{-\Psi}$ the density of $\mu$, then $\Psi_n$ converges point-wise to $\Psi$ on the positive axis.
\end{lemma}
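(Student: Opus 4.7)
The plan is to combine the uniform tail estimate \eqref{eq:psi} with Helly's selection theorem applied to the convex exponents, then identify the limit measure by dominated convergence.

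First I would secure a uniform $L^{1}$ dominator for the densities. Since \eqref{eq:psi} gives $\Psi_n(r) \geq ar$ for $r \geq 2$ with a constant $a > 0$ independent of $n$, and $\Psi_n \geq 0$ everywhere, the envelope
\[
  e^{-\Psi_n(r)} \leq g(r) := \mathbbm{1}_{[0,2]}(r) + e^{-ar}\,\mathbbm{1}_{(2,\infty)}(r)
\]
holds for every $n$, with $g \in L^{1}([0,\infty))$.

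Next I would extract a pointwise limit of the exponents by a two-stage diagonal argument. Set $r^{*} := \inf\{r > 0 : \sup_n \Psi_n(r) = +\infty\}$ with the convention $\inf\emptyset = +\infty$. Monotonicity of $\Psi_n$ makes $\{r : \sup_n \Psi_n(r) < \infty\}$ an interval, so $\sup_n \Psi_n(R) < \infty$ for every $R < r^{*}$. On each compact $[0,R]$ with $R < r^{*}$, Helly's selection theorem for convex functions supplies a locally uniformly convergent subsequence; a first diagonal extraction over $R \uparrow r^{*}$ yields a subsequence converging on $[0,r^{*})$ to a convex, non-decreasing $\Psi$ with $\Psi(0) = 0$. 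For $r > r^{*}$, the convexity relation $\Psi_n(r) \geq (r/r_0)\Psi_n(r_0)$, applied along a second diagonal extraction over a countable dense set $\{r_0\} \subset (r^{*},\infty)$ (exploiting $\sup_n \Psi_n(r_0) = +\infty$ at each such point), forces $\Psi_n(r) \to +\infty$ along the resulting subsequence, and I set $\Psi(r) := +\infty$ there.

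With the subsequence in hand, dominated convergence against $\|\varphi\|_\infty\, g$ gives, for every continuous bounded $\varphi$,
\[
  \int_0^\infty \varphi(r)\, e^{-\Psi_n(r)}\,dr \;\longrightarrow\; \int_0^\infty \varphi(r)\, e^{-\Psi(r)}\,dr,
\]
while the hypothesis $\mu_n \to \mu$ weakly forces the same left-hand side to converge to $\int \varphi\, d\mu$. Hence $\mu$ is absolutely continuous with log-concave density $e^{-\Psi}$, which is the first assertion. To upgrade pointwise convergence to the whole sequence, I would invoke the sub-subsequence principle: any subsequence of $(\Psi_n)$ admits a further subsequence processed exactly as above, producing a convex $\tilde\Psi$ with $e^{-\tilde\Psi}\,dr = \mu = e^{-\Psi}\,dr$; continuity of convex functions on the interior of their effective domain forces $\tilde\Psi = \Psi$ on $(0,r^{*})$, and both equal $+\infty$ on $(r^{*},\infty)$, so all subsequences share the same limit and the full sequence converges pointwise on the positive axis.

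The main technical point is the double diagonal extraction, which must deliver a single subsequence converging locally uniformly on $[0,r^{*})$ and simultaneously diverging pointwise on $(r^{*},\infty)$; the boundary point $r^{*}$ itself requires only a brief check and does not affect the pointwise statement away from it, since the mass at $\{r^{*}\}$ is null for an absolutely continuous limit.
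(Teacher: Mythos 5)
Your overall strategy is sound and reaches the same conclusion, but it diverges from the paper in two respects worth noting.

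First, the extraction step. You apply Helly's selection theorem (for convex functions) to the exponents $\Psi_n$, which forces you to worry about where $\sup_n\Psi_n$ blows up, introduce the threshold $r^*$, and run a double diagonal argument. The paper sidesteps this entirely by applying Helly's theorem for \emph{monotone} functions to the densities $e^{-\Psi_n}$, which are non-increasing and uniformly bounded in $[0,1]$ because $\Psi_n(0)=0$ and $\Psi_n$ is non-decreasing; one then recovers $\Psi$ as $-\log$ of the pointwise limit wherever the latter is positive. Your second diagonal, where you claim that $\sup_n\Psi_n(r_0)=+\infty$ for each $r_0$ in a countable dense subset of $(r^*,\infty)$ ``forces $\Psi_n(r)\to+\infty$ along the resulting subsequence,'' is the weak point: $\sup_n$ being infinite gives you \emph{some} subsequence tending to $+\infty$ at $r_0$, but you must still check that these further extractions are mutually compatible and compatible with the first diagonal. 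This can be repaired, but the monotone version of Helly makes the whole complication evaporate.

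Second, the passage from subsequential to full-sequence convergence. You invoke the sub-subsequence principle: every subsequence has a further subsequence converging to a convex $\tilde\Psi$ whose exponential is the density of $\mu$, and continuity of convex functions pins $\tilde\Psi=\Psi$. The paper instead gives a direct contradiction argument: assuming $\limsup_n e^{-\Psi_n(r)}>e^{-\Psi(r)}$ (or the analogous inequality for $\liminf$), it squeezes the difference between $\mu_n(a,r)$ (resp.\ $\mu_n(r,a)$) and the limiting measure of the same interval, using the monotonicity of $\Psi_n$ and the continuity of the limiting convex $\Psi$ as $a\to r$. Both routes are legitimate; yours is a standard compactness scheme, and the paper's is a hands-on estimate that never needs to re-extract. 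One small point your argument should make explicit is that when the sub-subsequential limits $\tilde\Psi$ and $\Psi$ agree Lebesgue-a.e.\ (from density identification), convexity plus the interval structure of their effective domains forces equality on all of $(0,\infty)$, with at most a single exceptional boundary point; the paper's continuity-in-$a$ argument handles this boundary automatically. Your treatment of the first claim (absolute continuity of $\mu$) via dominated convergence is fine and equivalent to the paper's simpler pointwise bound $\mu_n(U)\leq|U|$ followed by the portmanteau/regularity argument.
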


\begin{proof}
Denote by $e^{-\Psi_n}$ the density of $\mu_n$ with respect to the Lebesgue measure,
on $[0,\infty)$. Since $\mu_n \in \mathcal{S}$ we have $\mu_n(U) = \int_U e^{-\Psi_n(x)} dx \leq |U| e^{- \Psi_n(0)} = |U|$ for any Borel set $U$.  Thus for an open set $U$ such that $A \subseteq U$, by the weak convergence,
\begin{align*}
    \mu(A) \leq \mu(U) \leq \liminf_n \mu_n(U) \leq \liminf_n |U| = |U|.
\end{align*}
By the regularity of the Lebesgue measure, taking the infimum over all such $U$ gives,
\[
    \mu(A) \leq |A|.
\]
Thus $\mu$ has a density with respect to the Lebesgue measure, as expected. 

We will now show that $\lim_{n\rightarrow\infty}\Psi_{n}(x)=\Psi(x)$ for all $x \in [0,\infty)$.
By Helly's selection theorem we can extract from the sequence $(e^{-\Psi_n})$ a subsequence $(e^{-\Psi_{\varphi(n)}})$ converging point-wise to $e^{-\Psi}$.
In particular $\Psi$ is convex and therefore continuous on $(0,\infty)$.  
%Now every subsequence of $(\Psi_n(x))$ must converge to $\Psi(x)$, so that the whole sequence converge to $\Psi(x)$ and the proof is complete.

Remark that for all $a,b>0$, weak convergence implies that $\lim_{n\rightarrow\infty}\mu_{n}(a,b)=\mu(a,b)$. By contradiction, assume that $\limsup_{n\rightarrow\infty}e^{-\Psi_{n}(r)}>e^{-\Psi(r)}$ for some $r > 0$. 
The strict inequality guarantees  the existence of $\epsilon>0$ such that $\limsup_{n\rightarrow\infty}e^{-\Psi_{n}(r)} > \varepsilon + e^{-\Psi(r)}$. By monotonicity of $\Psi_n$, for any $0<a<r$ it holds
     \begin{align*}
      (r-a)(\varepsilon + e^{-\Psi(r)})
      < (r-a) \limsup_{n\rightarrow\infty} e^{-\Psi_n(r)} 
      \leq 
      \limsup_{n\rightarrow\infty}\mu_{n}(a,r)
      =\mu(a,r) 
      \leq (r-a)e^{-\Psi(a)} .
      \end{align*} 
      Therefore $\varepsilon + e^{-\Psi(r)} < e^{-\Psi(a)}$ that leads, by continuity of $\Psi$, in the limit $a \to r$, to a contradiction. Hence,  for all $r >0$, $\limsup_{n\rightarrow\infty}e^{-\Psi_{n}(r)} \leq e^{-\Psi(r)}$.
     
     Assume now that  $\liminf_{n\rightarrow\infty}e^{-\Psi_{n}(r)}<e^{-\Psi(r)}$ for some $r > 0$. Similarly, for some $\varepsilon>0$ and any $a>r$ it holds
  \begin{align*}
      (a-r)(-\varepsilon + e^{-\Psi(r)})
      > (a-r) \liminf_{n\rightarrow\infty} e^{-\Psi_n(r)} 
      \geq 
      \limsup_{n\rightarrow\infty}\mu_{n}(r,a)
      =\mu(r,a) 
      \geq (a-r)e^{-\Psi(a)} .
      \end{align*}    
This leads in the limit $a \to r$ to a contradiction and therefore to the desired conclusion: $\Psi_n(x) \to \Psi$ point-wise on $(0,\infty)$ and hence on $[0,\infty)$ since $\Psi_n(0)=\Psi(0)=0$.

\end{proof}

\begin{lemma} \label{lem:compact}
    The set $\mathcal{S}$ is compact  for the topology of the weak convergence.
\end{lemma}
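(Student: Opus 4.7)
The plan is to combine Lemma \ref{lem:tight} and Lemma \ref{lem:absolutely-continuous} with Prokhorov's theorem, reducing compactness to a passage to the limit in the weighted normalization.

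Given a sequence $(\mu_n) \subset \mathcal{S}$, I would first observe that the total mass is uniformly bounded: splitting the integral $\int_0^\infty e^{-\Psi_n(r)}\,dr$ at $r=2$ and using $\Psi_n \geq 0$ on $[0,2]$ together with the uniform estimate $\Psi_n(r) \geq ar$ for $r \geq 2$ from \eqref{eq:psi} gives $\mu_n([0,\infty)) \leq 2 + e^{-2a}/a$, independent of $n$ (with $a$ as in \eqref{eq:def_a}). Combined with tightness (Lemma \ref{lem:tight}), Prokhorov's theorem furnishes a subsequence $\mu_{n_k}$ converging weakly to a finite positive measure $\mu$ on $[0,\infty)$. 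Lemma \ref{lem:absolutely-continuous} then gives that $\mu$ is absolutely continuous with density $e^{-\Psi}$ where $\Psi_{n_k} \to \Psi$ pointwise on $[0,\infty)$.

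It remains to verify that $\mu \in \mathcal{S}$. From pointwise convergence, $\Psi(0)=0$ (since $\Psi_{n_k}(0)=0$), $\Psi$ is non-decreasing as a pointwise limit of non-decreasing functions, and $\Psi$ is convex as a pointwise limit of convex functions (so the density $e^{-\Psi}$ is log-concave, placing $\mu$ in $\mathcal{M}$). The delicate point, and the main obstacle, is the weighted normalization $\int_0^\infty e^{-\Psi(r)} r^{n-1}\,dr = 1$: the weight $r^{n-1}$ is unbounded, so weak convergence of the $\mu_{n_k}$ does not immediately apply.

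To handle this, my plan is to invoke dominated convergence with the integrable dominating function $g(r) = r^{n-1} \mathbbm{1}_{[0,2]}(r) + e^{-ar} r^{n-1} \mathbbm{1}_{[2,\infty)}(r)$, which uniformly bounds $e^{-\Psi_{n_k}(r)} r^{n-1}$ in $k$: on $[0,2]$ this follows from $\Psi_{n_k} \geq 0$, and on $[2,\infty)$ from the uniform estimate \eqref{eq:psi}. The pointwise convergence $e^{-\Psi_{n_k}(r)} \to e^{-\Psi(r)}$ then yields $\int_0^\infty e^{-\Psi(r)} r^{n-1}\,dr = \lim_k \int_0^\infty e^{-\Psi_{n_k}(r)} r^{n-1}\,dr = 1$, completing the verification that $\mu \in \mathcal{S}$ and hence the compactness.
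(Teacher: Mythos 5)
Your proof is correct and follows essentially the same route as the paper's: Prokhorov's theorem together with Lemma~\ref{lem:tight} to extract a weakly convergent subsequence, Lemma~\ref{lem:absolutely-continuous} to identify the limit density $e^{-\Psi}$ and obtain pointwise convergence, and dominated convergence with the dominating function $g$ to pass to the limit in the weighted normalization. A small improvement on your part: you skip the paper's redundant initial invocation of Helly's selection theorem (already built into the proof of Lemma~\ref{lem:absolutely-continuous}), and you explicitly record the uniform bound on total mass $\mu_n([0,\infty)) \leq 2 + e^{-2a}/a$, which the paper leaves implicit but which is genuinely needed to apply Prokhorov here, since the elements of $\mathcal{S}$ are normalized in $L^1(r^{n-1}dr)$ and are therefore not probability measures on $[0,\infty)$.
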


\begin{proof}
    Consider a sequence $(\mu_n)$ of $\mathcal{S}$. We will prove that we can extract a converging subsequence (for the weak convergence) to a limit $\mu \in \mathcal{S}$.
     Denote by $e^{-\Psi_n}$ the density of $\mu_n$ with respect to the Lebesgue measure,
on $[0,\infty)$. Since $\mu_n \in \mathcal{S}$, $e^{-\Psi_n} \in [0,1]$ is non-increasing. Applying Helly's selection theorem we can extract a subsequence
$e^{-\Psi_{\varphi(n)}}$ converging pointwise to some limit $e^{-\Psi}$. Now by Lemma \ref{lem:tight} and Prokhorov's theorem we can further extract from the sub-sequence $(\mu_{\varphi(n)})$ a sub-subsequence weakly converging to some measure $\mu$ (with density $e^{-\Psi}$ by Lemma \ref{lem:absolutely-continuous}). Let's call for simplicity again $\mu_n$ this converging sub-subsequence.

To prove that $\mathcal{S}$ is compact, it only remains to prove that the limit $\mu$ belongs to $\mathcal{S}$. 

By the point-wise convergence we infer that $\Psi(0)=0$ and that $\Psi$ is non-decreasing and convex. We therefore only need to check that
$$
\int_{0}^{\infty}e^{-\Psi(r)}r^{n-1}dr=1.
$$
Recall the definition of $a$ defined in \eqref{eq:def_a}
and set $g(r)=r^{n-1} \left(\mathds{1}_{[0,2]} + e^{-ar}\mathds{1}_{(2,\infty)}\right)$. From \eqref{eq:psi} we see that $\Psi_m \leq g$ for all $m$, and since $g$ is integrable on the positive axis, by the dominated convergence theorem, we finally obtain
$$
1 = \lim_{m \to \infty} \int_0^\infty e^{-\Psi_m(r)}r^{n-1}dr = \int_0^\infty e^{-\Psi(r)}r^{n-1}dr
$$
as expected. This ends the proof of the Lemma.
\end{proof}
    
We are now in position to proove Theorem \ref{thm: existence of extremizer}.

\begin{proof} [Proof of Theorem \ref{thm: existence of extremizer}]
In the variational problem $\sup_{f\in\mathrm{Cvx}_0(\mathbb{R}^{n})}\int e^{-f(x)}dx\int e^{-\mathcal{A}f(x)}dx$ under consideration, $e^{-f}$ and $e^{-\mathcal{A}f}$ are non-negative functions. Therefore, we use the layer cake decomposition to get
\begin{align*}
\int e^{-f(x)}dx\int e^{-\mathcal{A}f(x)}dx
& =
\int_0^\infty |\{e^{-f} > t\}|dt \int_0^\infty |\{e^{-\mathcal{A}f} > t\}|dt \\
& =
\int_0^\infty |\{f < -\log t\}|dt \int_0^\infty |\{\mathcal{A}f < -\log t\}|dt .
\end{align*}
Now by \eqref{eq: increasing rearrangement characterization} and the definition of the rearrangement, it holds
$$
|\{f < -\log t\}|=|\{f < -\log t\}^*|=|\{f_* < -\log t\}|
$$ 
for any $t>0$. Therefore, using Theorem \ref{thm: majorization rearrangement Legendre}/Theorem \ref{thm: majorization rearrangement Polar} (according to the transform $\mathcal{A}$ under consideration), we get
$$
\int e^{-f(x)}dx\int e^{-\mathcal{A}f(x)}dx
\leq 
\int_0^\infty |\{f_* < -\log t\}|dt \int_0^\infty |\{\mathcal{A}f_* < -\log t\}|dt .
$$
Using the layer cake decomposition backward it follows that
$$
\int e^{-f(x)}dx\int e^{-\mathcal{A}f(x)}dx
\leq 
\int e^{-f_*(x)}dx\int e^{-\mathcal{A}f_*(x)}dx .
$$
Hence, as a first intermediate conclusion, since $f_* \in \mathrm{Cvx}_0(\mathbb{R}^{n})$,
%since $\Lambda$ is stable under the action of the Legendre/polar transform, 
it holds
$$
\sup_{f \in \mathrm{Cvx}_0(\mathbb{R}^{n})} \int e^{-f(x)}dx\int e^{-\mathcal{A}f(x)}dx
=
\sup_{f \in \mathrm{Cvx}_0(\mathbb{R}^{n})} \int e^{-f_*(x)}dx\int e^{-\mathcal{A}f_*(x)}dx .
$$
Now we observe that for any $f \in \mathrm{Cvx}_0(\mathbb{R}^{n})$, there exists a convex non-decreasing function $\Psi \colon [0,\infty) \to [0,\infty)$, with $\Psi(0)=0$ such that $f_*(x)=\Psi(\|x\|)$, $x \in \mathbb{R}^d$. Moreover, it is not difficult to check that 
$\mathcal{A}f_*(x)=\mathcal{A}\Psi(\|x\|)$ for any $x \in \mathbb{R}^d$. Therefore, passing to polar coordinates, we end up with
$$
\sup_{f \in \mathrm{Cvx}_0(\mathbb{R}^{n})} \int e^{-f(x)}dx\int e^{-\mathcal{A}f(x)}dx
= (n\omega_n)^2
\sup_{\Psi} \int_0^\infty e^{-\Psi(r)}r^{n-1}dx
\int_0^\infty e^{-\mathcal{A}\Psi(r)}r^{n-1}dr 
$$
where the supremum is running over all $\Psi$ convex, non-decreasing and satisfying $\Psi(0)=0$, and where $\omega_n\coloneqq\pi^{n/2}/\Gamma(\frac{n}{1}+1)$ denotes the volume of the $n$ dimensional Euclidean unit ball. 

We also observe that the left and right hand side are invariant under scaling, \textit{i.e.}\ changing $f$ into $x \mapsto f(\lambda x)$, and similarly for $\Psi$ does not affect the products $\int e^{-f(x)}dx\int e^{-\mathcal{A}f(x)}dx$ and $\int_0^\infty e^{-\Psi(r)}r^{n-1}dx\int_0^\infty e^{-\mathcal{A}\Psi(r)}r^{n-1}dr$. Therefore, we can assume $\int e^{-\Psi(r)}r^{n-1}dr=1$ and 
$$
\sup_{f \in \mathrm{Cvx}_0(\mathbb{R}^{n})} \int e^{-f(x)}dx\int e^{-\mathcal{A}f(x)}dx
= (n\omega_n)^2 \sup_{\mu \in \mathcal{S}} \int_0^\infty e^{-\mathcal{A}\Psi(r)}r^{n-1}dr .
$$
To conclude the proof of the lemma we claim that the map $\mathcal{S} \ni \mu \mapsto \int_0^\infty e^{-\mathcal{A}\Psi(r)}r^{n-1}dr$  is upper semi continuous. Indeed, consider a weakly converging sequence $(\mu_m)$ of $\mathcal{S}$. By compactness of $\mathcal{S}$ (Lemma \ref{lem:compact}), $\mu \in \mathcal{S}$, and by Lemma \ref{lem:absolutely-continuous},  if $e^{-\Psi_m}$ denote the density of $\mu_m$ and $e^{-\Psi}$ the density of $\mu$, $\Psi_m$ converge point-wise to $\Psi$.
%$\Psi_{n}\rightarrow\Psi$ in a pointwise fashion. 
By Fatou's Lemma, it follows
$$
\limsup_{m\rightarrow\infty}\int_{0}^{\infty}e^{-\mathcal{A}\Psi_{m}(r)}r^{n-1}dr\leq\int_{0}^{\infty}\limsup_{m\rightarrow\infty}e^{-\mathcal{A}\Psi_{m}(r)}r^{n-1}dr
\leq 
\int_{0}^{\infty}e^{-\mathcal{A}\Psi(r)}r^{n-1}dr,
$$ 
where in the last inequality we used that $\liminf \mathcal{A}\Psi_m \geq \mathcal{A}\Psi$, a property that can easily be checked from the definition of the Legendre/polar transform, proving the claim. 
Since upper semi-continuous functions attains their maximum on compact sets, the proof of the theorem is complete.

\end{proof}

\subsection{Functional Blaschke-Santalo's inequality via Semi-group revisited}

In this section we will prove the following well-known functional Blaschke-Santalo's inequality \cite{ball86}: for all measurable super linear function $f \in \mathrm{Cvx}_0(\mathbb{R}^n)$ 
%on $\mathbb{R}^n$ with 
%$\int e^{-f(x})dx < \infty$ 
it holds
\begin{equation} \label{eq:functionalBS}
    \int e^{-f(x)}dx \int e^{- \mathcal{L}f(x)}dx \leq (2\pi)^n .
\end{equation}
Recall that $\mathrm{Cvx}_0(\mathbb{R}^{n})$ denotes the set of non-negative even convex functions on $\mathbb{R}^{n}$ taking the value zero at at the origin and a function is called super-linear if for all $x \in \mathbb{R}^n$, $f(x) \geq a|x|+b$ for some $a>0$, $b \in \mathbb{R}$. Super-linearity  will allow us to avoid some non-crucial  technicalities. We follow \cite{cordero} here, with this choice.

We stress that there is nothing  new in this section: our aim is to show how our rearrangement results developed earlier can simplify some aspect of known proofs. More specifically, following the semi-group argument of 
\cite{cordero}, thanks to our reduction to dimension 1, their final Brascamp-Lieb argument will reduce to the trivial fact that a square is non-negative. We may, at different step, refer to \cite{artstein-klartag-milman,cordero} for some technical details.

By Theorem \ref{thm: existence of extremizer}
we know that
$$
    \sup_{f\in \mathrm{Cvx}_0(\mathbb{R}^{n})} 
    \int e^{-f}\int e^{-\mathcal{L}f}
    =
    (n\omega_{n})^2 
    \sup_{\Psi} 
    \int_0^\infty e^{-\Psi(r)}r^{n-1}dr 
    \int_0^\infty e^{-\mathcal{L}\Psi(r)}r^{n-1}dr
$$
where the second supremum runs now over all super-linear\footnote{It is straight forward to adapt the proof of  Theorem \ref{thm: existence of extremizer} to take into account the extra super-linearity assumption considered here.}  convex non-decreasing function $\Psi \colon [0,\infty) \to [0,\infty)$ with $\Psi(0)=0$, and 
$\mathcal{L}\Psi(r)=\sup_{s>0}\{rs-\Psi(s)\}$. 
Super-linearity ensures in particular that the Laplace transform of $\Psi$ is finite on the whole $[0,\infty)$.

\subsubsection*{Technical preparation: the Bessel semi-group}

On $(0,\infty)$, define the following generator
$$
L^bf(r)=f''(r) +\frac{n-1}{r}f'(r), \qquad r>0
$$
where $f \colon (0,\infty) \to \mathbb{R}$ is smooth enough. We denote $(P_t^b)_{t \geq 0}$
its associated semi-group and observe that the reversible measure is $\nu(dr)=r^{n-1}dr$, $r \in (0,\infty)$. 
This is the projection of the heat semi-group $(P_t^h)_{t \geq 0}$ on radial functions, called the Bessel semi-group. More precisely, for $f \colon (0,\infty) \to \mathbb{R}$, define $\tilde{f}(x)=f(\|x\|)$, $x \in \mathbb{R}^n$ so that
$$
P_t^bf(\|x\|) = P_t^h \tilde f(x) = \tilde f* g_t(x), \qquad x \in \mathbb{R}^n ,
$$ 
with $g_t(x)\coloneqq e^{-\|x\|^2/(4t)}/(4\pi t)^{n/2}$.

Consider a super-linear, convex, non-decreasing function $\Psi \colon (0,\infty) \to \mathbb{R}$, with $\Psi(0)=0$, and set 
$$
\Psi_t(r)
\coloneqq
-\log P_t^b(e^{-\Psi})(r), \qquad r \geq 0 .
$$
for $\Psi$ defined on the positive axis, super-linear non-decreasing, with $\Psi(0) = 0$. 
Being a convolution with a Gaussian kernel $P_t^be^{-\Psi}$ is positive, $\mathcal{C}^\infty$ smooth. 
Moreover, super-linearity ensures that $\Psi_t$ is also super-linear (see \cite{cordero} for the details), and together with the convexity property, $\Psi_t$ is strictly convex \cite{cordero}. 
In particular, $\Psi_t'$ has an reciprocal function we denote ${\Psi_t'}^{-1}$ and, as is easy to check from the definition,  
$(\mathcal{L}\Psi_t)'={\Psi_t'}^{-1}$
and $\mathcal{L}\Psi_t(r) = r\Psi_t(r)- \Psi_t((\mathcal{L}\Psi_t)'(r)))$
on the positive axis. Taking the derivative in $t$, it follows that 
$$
\frac{d}{dt}\mathcal{L}\Psi_t(r) 
=
r \frac{d}{dt}(\mathcal{L}\Psi_t)'(r)
-\Psi_t'((\mathcal{L}\Psi_t)'(r)) \frac{d}{dt}(\mathcal{L}\Psi_t)'(r)
- \frac{d}{dt}\Psi_t ((\mathcal{L}\Psi_t)'(r)) 
=
- \frac{d}{dt}\Psi_t ((\mathcal{L}\Psi_t)'(r)) 
$$
where the last equality follows from the identity $\Psi_t'((\mathcal{L}\Psi_t)'(r)) =r$.
Since $\Psi_t'=- \frac{P_t^b(e^{-\Psi})'}{P_t^b(e^{-\Psi})}$ and
 $\Psi_t''=- \frac{P_t^b(e^{-\Psi})''}{P_t^b(e^{-\Psi})}+ \left(\frac{P_t^b(e^{-\Psi})'}{P_t^b(e^{-\Psi})} \right)^2$, it holds
 \begin{align*}
     \frac{d}{dt}\Psi_t (r)
     & = 
     -\frac{L^b P_t^b(e^{-\Psi})(r)}{P_t^b(e^{-\Psi})(r)} \\
     & =
     \frac{-1}{P_t^b(e^{-\Psi})(r)} 
     \left( P_t^b(e^{-\Psi})''(r) + \frac{n-1}{r} P_t^b(e^{-\Psi})'(r)\right) \\
     & =
     -\Psi_t'(r)^2 + \Psi_t''(r) + \frac{n-1}{r} \Psi_t'(r) .
 \end{align*}
 Therefore
 \begin{align*}
 \frac{d}{dt} \mathcal{L}\Psi_t(r) 
 & = - \frac{d}{dt}\Psi_t((\mathcal{L}\Psi_t)'(r)) 
  = 
 - \frac{d}{dt}\Psi_t({\Psi_t'}^{-1}(r)) \\
 & =
 r^2 - \Psi_t''({\Psi_t'}^{-1}(r)) - \frac{n-1}{{\Psi_t'}^{-1}(r)}r , \qquad r >0 .
 \end{align*}
 Now $\Psi_t'( ({\mathcal{L}\Psi_t)'}(r))=r$ guarantees that
 $\Psi_t''((\mathcal{L}\Psi_t)'(r))(\mathcal{L}\Psi_t)''(r)=1$, so that, since $(\mathcal{L}\Psi_t)'={\Psi_t'}^{-1}$,
 $\Psi_t''({\Psi_t'}^{-1}(r))=\frac{1}{(\mathcal{L}\Psi_t)''(r)}$.
 It follows that
 \begin{equation} \label{eq:cordero}
 \partial_t \mathcal{L}\Psi_t(r) 
 =
 -\frac{1}{(\mathcal{L}\Psi_t)''(r)} + r^2 - \frac{n-1}{(\mathcal{L}\Psi_t)'(r)}r , \qquad r >0 .
 \end{equation}

\subsubsection*{Proof of  the functional Blaschke-Santalo's inequality \eqref{eq:functionalBS}}

Following \cite{cordero}, our aim is to prove that
$
t \mapsto 
\int_0^\infty e^{-\Psi_t(r)}r^{n-1}dr  
\int_0^\infty e^{-\mathcal{L}\Psi_t(r)}r^{n-1}dr  
$
 is non-decreasing. 
This would imply that 
$$
\int_0^\infty \!e^{-\Psi(r)}r^{n-1}dr  
\int_0^\infty \!e^{-\mathcal{L}\Psi(r)}r^{n-1}dr
\leq 
\lim_{t \to \infty}
\int_0^\infty \!e^{-\Psi_t(r)}r^{n-1}dr  
\int_0^\infty \!e^{-\mathcal{L}\Psi_t(r)}r^{n-1}dr
= \frac{(2\pi)^n}{(n \omega_n)^2}
$$
 where the limit can easily be computed thanks to the previous technical preparation (see \cite{artstein-klartag-milman,cordero} for  details). 
 
Observe first that, by invariance,
\begin{equation} \label{eq:0}
\int_0^\infty e^{-\Psi_t(r)}r^{n-1}dr = 
\int_0^\infty P_t^b(e^{-\Psi})r^{n-1}dr 
=
\int_0^\infty e^{-\Psi(r)} r^{n-1}dr .
\end{equation}
Therefore the first factor in $\int_0^\infty e^{-\Psi_t(r)}r^{n-1}dr  
\int_0^\infty e^{-\mathcal{L}\Psi_t(r)}r^{n-1}dr$  does not depend on $t$ and we are reduced to proving that
$$
t \mapsto \alpha(t) \coloneqq \int_0^\infty e^{-\mathcal{L}\Psi_t(r)}r^{n-1}dr 
$$
is non-decreasing on the positive axis.
Now, thanks to \eqref{eq:cordero}, it holds
\begin{align*}
\alpha'(t) 
& = 
-   
    \int_0^\infty  \partial_t \mathcal{L}\Psi_t(r) e^{-\mathcal{L}\Psi_t(r)}r^{n-1}dr  \\
&    =
    \int_0^\infty   \left( \frac{1}{(\mathcal{L}\Psi_t)''(r)} - r^2 + \frac{n-1}{(\mathcal{L}\Psi_t)'(r)}r \right) e^{-\mathcal{L}\Psi_t(r)}r^{n-1}dr .     
\end{align*}
% Since $\Psi_t'=- \frac{Q_t(e^{-\Psi})'}{Q_t(e^{-\Psi})}$ and
% $\Psi_t''=-\frac{Q_t(e^{-\Psi})''}{Q_t(e^{-\Psi})}+\left(\frac{Q_t(e^{-\Psi})'}{Q_t(e^{-\Psi})}\right)^2$, for $s:=(\mathcal{L}\Psi_t)'(r)$, from the technical preparation we infer that 
% \begin{align*}
%    \partial_t \mathcal{L}\Psi_t(r) 
%    & = 
%     \frac{M Q_t(e^{-\Psi})(s)}{Q_t(e^{-\Psi})(s)} \\
%     & =
%     \frac{Q_t(e^{-\Psi})''(s)}{Q_t(e^{-\Psi})(s)}
%     + \left( \frac{n-1}{s} + s \right) \frac{Q_t(e^{-\Psi})'(s)}{Q_t(e^{-\Psi})(s)}
%     + n \\
%     & =
%     \Psi_t'(s)^2 - \Psi_t''(r) -\left( \frac{n-1}{s}+s \right) \Psi_t'(s) +n  .
% \end{align*}
% In turn, since $(\mathcal{L}\Psi_t)'(r)={\Psi_t'}^{-1}(r)$, 
% \begin{align*}
% \partial_t \mathcal{L}\Psi_t(r) 
% & =
% r^2 - \Psi_t''({\Psi_t'}^{-1}(r)) -\left( \frac{n-1}{(\mathcal{L}\Psi_t)'(r)}+(\mathcal{L}\Psi_t)'(r) \right) r + n , \qquad r >0 .
% \end{align*}
% Now $\Psi_t'( {\mathcal{L}\Psi_t'}(r))=r$ guarantees that
% $\Psi_t''({\mathcal{L}\Psi_t'}(r)){\mathcal{L}\Psi_t''}(r)=1$, so that
% $\Phi_t''({\Phi_t'}^{-1}(r))=\frac{1}{(\mathcal{L}\Phi_t)''(r)}$.
% It follows that
% $$
% \partial_t \mathcal{L}\Phi_t(r) 
% =
% r^2 - \frac{1}{(\mathcal{L}\Phi_t)''(r)} -\left( \frac{n-1}{(\mathcal{L}\Phi_t)'(r)}+(\mathcal{L}\Phi_t)'(r) \right)r + n  , \qquad r >0 .
% $$
% Therefore $\alpha'(t) \geq 0$ is equivalent to proving  that
% $$
% \int_0^\infty \left(\frac{1}{(\mathcal{L}\Psi_t)''(r)} - r^2 + \left( \frac{n-1}{(\mathcal{L}\Psi_t)'(r)}+(\mathcal{L}\Psi_t)'(r) \right) - n  \right) e^{-(\mathcal{L}\Psi_t)(r)}r^{n-1}dr \geq 0  %$$
To ease notation, we set   $V=\mathcal{L}\Psi_t$ and observe that, $V',V''>0$. In order to handle the border case in the forthcoming integration by part, we introduce for $\varepsilon >0$,
$V_\varepsilon(r) \coloneqq V(r) + \varepsilon r$, $r \geq 0$. We claim that
\begin{equation} \label{eq:claim}
\int_0^\infty  \left( \frac{1}{V_\varepsilon''(r)} - r^2 + \frac{n-1}{V_\varepsilon'(r)}r \right) e^{-V_\varepsilon(r)}r^{n-1}dr     \geq 0 .
\end{equation}
This would imply $\alpha'(t) \geq 0$ by the monotone convergence theorem, in the limit $\varepsilon \downarrow 0$. Therefore to prove \eqref{eq:functionalBS} it only remains to establish \eqref{eq:claim}. To that aim, we use an integration by part to get 
\begin{align*}
-\int e^{-V_\varepsilon(r)}r^{n+1}dr
& =
\int -V_\varepsilon'(r)e^{-V_\varepsilon(r)} \frac{r^{n+1}}{V_\varepsilon'(r)}dr \\
& =
\left[ e^{-V_\varepsilon(r)}r^{n+1}/V_\varepsilon'(r)\right]_0^\infty 
- \int_0^\infty \left(
\frac{(n+1)r^n}{V_\varepsilon'(r)} - \frac{V''_\varepsilon(r)r^{n+1}}{V'_\varepsilon(r)^2} \right)
e^{-V_\varepsilon(r)} dr\\
& =
- \int_0^\infty 
\frac{(n-1)r}{V_\varepsilon'(r)} e^{-V_\varepsilon(r)} r^{n-1}dr
- \int_0^\infty \left(
\frac{2r}{V_\varepsilon'(r)} - \frac{V''_\varepsilon(r)r^2}{V_\varepsilon'(r)^2} \right)
e^{-V_\varepsilon(r)} r^{n-1}dr 
\end{align*}
where in the last equality we used that $V_\varepsilon$ is strictly convex and that $V_\varepsilon'(0) \geq \varepsilon$ so that the bracket term vanishes.
%{(\color{red}Cyril: I used that the first term (bracket term) vanishes. To be checked.)}
%Similarly
%\begin{align*}
%\int_0^\infty e^{-V(r)} V'(r) r^{n}dr
%& = 
%\left[ -e^{-V(r)} r^n \right]_0^\infty
%- \int_0^\infty n e^{-V(r)} r^{n-1}dr     
% =
%- \int_0^\infty n e^{-V(r)} r^{n-1}dr .
%\end{align*}
Therefore,
\begin{align*}
 & \int_0^\infty \left(\frac{1}{V_\varepsilon''(r)} -r^2 + \frac{n-1}{V_\varepsilon'(r)}r \right)  e^{-V_\varepsilon(r)}r^{n-1}dr \\
  & \qquad \qquad \qquad\qquad=
  \int_0^\infty \left(\frac{1}{V_\varepsilon''(r)} -\frac{2r}{V_\varepsilon'(r)} + \frac{V_\varepsilon''(r)r^2}{V_\varepsilon'(r)^2} \right) e^{-V_\varepsilon(r)}r^{n-1}dr \\
  &  \qquad \qquad \qquad\qquad=
  \int_0^\infty \frac{(rV_\varepsilon''(r)-V_\varepsilon'(r))^2}{V''_\varepsilon(r)V'_\varepsilon(r)^2} e^{-V_\varepsilon(r)}r^{n-1}dr \\
  & \qquad \qquad \qquad\qquad 
  \geq 0
\end{align*}
as expected.

\bibliographystyle{plain}
\bibliography{rearrangement}

\end{document}